\keywords{Planar function, Algebraic degree, Stickelberger's theorem, Digit sum}
\title{A degree bound for planar functions}
\author[1]{Christof Beierle}
\author[2,1]{Tim Beyne\thanks{Supported by a junior postdoctoral fellowship from the Research Foundation -- Flanders (FWO) with reference number 1274724N.}}
\affil[1]{%
Faculty of Computer Science, Ruhr University Bochum, Bochum, Germany

\email{christof.beierle@rub.de}%
}
\affil[2]{%
COSIC, KU Leuven, Leuven, Belgium

\email{tim.beyne@esat.kuleuven.be}%
}
\newcommand{\comp}[1]{\bar{#1}}
\newcommand{\F}{\mathbb{F}}
\newcommand{\QQ}{\mathbb{Q}}
\newcommand{\ZZ}{\mathbb{Z}}
\newcommand{\CC}{\mathbb{C}}
\newcommand{\K}{\mathbb{K}}
\newcommand{\tr}{\mathrm{tr}}
\newcommand{\wt}{s_p}
\newcommand{\dual}{\vee}
\DeclareMathOperator{\ord}{ord}
\DeclareMathOperator{\FT}{\mathscr{F}}
\DeclareMathOperator{\UT}{\mathscr{U}}
\DeclareMathOperator{\TT}{\mathscr{T}}
\newcommand{\adeg}[1]{d_{\mathsf{alg}}(#1)}
\begin{document}

\maketitle

\begin{abstract}
  Using Stickelberger's theorem on Gauss sums, we show that if $F$ is a planar function on a finite field $\mathbb{F}_q$, then for all non-zero functions $G : \mathbb{F}_q \to \mathbb{F}_q$, we have
\begin{equation*}
d_{\mathsf{alg}}(G \circ F) - d_{\mathsf{alg}}(G) \le \frac{n(p-1)}{2},
\end{equation*}
where $q = p^n$ with $p$ a prime and $n$ a positive integer, and $d_{\mathsf{alg}}(F)$ is the algebraic degree of $F$, i.e., the maximum degree of the corresponding system of $n$ lowest-degree interpolating polynomials for $F$ considered as a function on $\mathbb{F}_p^n$. This bound implies the (known) classification of planar polynomials over $\mathbb{F}_p$ and planar monomials over $\mathbb{F}_{p^2}$. As a new result, using the same degree bound, we complete the classification of planar monomials for all $n = \smash{2^k}$ with $p>5$ and $k$ a non-negative integer. Finally, we state a conjecture on the sum of the base-$p$ digits of integers modulo $q-1$ that implies the complete classification of planar monomials over finite fields of characteristic $p>5$.
\end{abstract}

\section{Introduction}
Throughout this work, let $p$ be a prime and $n$ a positive integer. Let $\F_{q}$ denote a field with $q = p^n$ elements. A function $F\colon \F_{q} \rightarrow \F_{q}$ is called \emph{planar} if the mappings
\begin{align}\label{eq:derivatives}x \mapsto F(x+\alpha) - F(x) -F(\alpha)\end{align}
are permutations on $\F_{q}$ for all non-zero $\alpha$ in $\F_{q}$. The notion of planar functions originally comes from finite geometry and goes back to Dembowski and Ostrom~\cite{dembowski1968planes}, who studied projective planes of finite order $n$ possessing a collineation group of order $n^2$. Planar functions have a strong relation to commutative semifields~\cite{coulter2008commutative} and (partial) difference sets~\cite{DBLP:journals/dcc/WengQWX07}, and have applications in coding theory~\cite{DBLP:journals/tit/CarletDY05}. 
They can only exist for odd characteristic $p$, as for $p=2$ any element in the image of the function defined in~\eqref{eq:derivatives} has at least two preimages $x$ and $x+\alpha$. If $p$ is odd, then there always exists a planar function, the canonical example being $x \mapsto x^2$. A complete classification is only known over prime fields, and was established independently by Gluck~\cite{DBLP:journals/dm/Gluck90}, Hiramine~\cite{hiramine1989conjecture}, and Ronyai and Sz{\H{o}}nyi~\cite{ronyai1989planar}. More precisely, a function over $\F_p$ with $p$ an odd prime is planar if and only if it is of the form $x \mapsto ax^2+bx+c$  with $a \neq 0$. For more details on planar functions, their properties and known families, we refer to the survey by Pott~\cite{DBLP:journals/dcc/Pott16}. 

There is a one-to-one correspondence between functions $F \colon \F_{q} \rightarrow \F_{q}$ and polynomials in the quotient ring $\F_{q}[X] /(X^{q}-X)$, as any function over $\F_{q}$ is the evaluation map of a unique interpolating polynomial of the form
\begin{align}
\label{eq:univariate_representation}
    \sum_{i=0}^{q-1} a_i X^i, \quad a_i \in \F_{q}.
\end{align}
A polynomial is called \emph{planar} if its evaluation map is planar. For odd $p$, a polynomial of the form
\[\sum_{0 \leq i \leq j \leq n-1} a_{i,j}X^{p^{i}+p^j}, \quad a_{i,j} \in \F_{q}\]
is called a \emph{Dembowski-Ostrom polynomial}. Planar Dembowski-Ostrom polynomials are an interesting special case of planar polynomials, as they are in one-to-one correspondence with commutative presemifields of odd order, by defining the presemifield operation $\odot$ corresponding to a planar Dembowski-Ostrom polynomial $F$ as $x \odot \alpha = F(x+\alpha) - F(x) - F(\alpha)$, see~\cite{coulter2008commutative}. In this work, Coulter and Henderson completely classified planar Dembowski-Ostrom polynomials over $\F_{p^2}$ and $\F_{p^3}$. 

In~\cite{dembowski1968planes}, Dembowski and Ostrom mentioned the possibility that every planar function can be represented as a Dembowski-Ostrom polynomial.\footnote{Adding a polynomial of the form $c + \sum_{i=0}^{n-1}a_iX^{p^i}$ with $c$ and $a_i$ in $\F_{q}$ does not affect the planar property, so this statement only makes sense for polynomials where the coefficients of $\smash{X^{p^i}}$ are zero and without constant term.} This conjecture was actually proven false by Coulter and Matthews in~\cite{DBLP:journals/dcc/CoulterM97} by showing that the monomial $X^{(3^i+1)/2}$ is planar over $\F_{3^n}$ if $\gcd(i,n) = 1$ and $i$ is odd. A special case of this family of counterexamples was independently discovered in~\cite{DBLP:journals/aaecc/HellesethS97}. However, the conjecture remains open for $p>3$ and (up to the notion of graph equivalence, i.e., two functions $F$ and $G$ on $\F_q$ are \emph{graph equivalent} if there exists an affine bijection on the $\F_p$-vector space $\F_q^2$ that maps the graph of $F$ to the graph of $G$) the counterexamples found in~\cite{DBLP:journals/dcc/CoulterM97} are the only known counterexamples to the Dembowski-Ostrom conjecture. 

There is an ongoing line of research aiming at the classification of planar monomials, i.e., planar polynomials of the form $X^d$. So far, the complete classification of planar monomials over $\F_{p^n}$ is known only for $n$ in $\{1,2,3,4\}$, see~\cite{johnson1987projective,coulter2006classification,DBLP:journals/ffa/BergmanCV22,DBLP:journals/ffa/CoulterL12}. Besides that, planar Dembowski-Ostrom monomials have been fully classified: a monomial $X^{p^i + p^j}$ over $\F_{q}$ with $0 \leq i \leq j \leq n-1$ is planar if and only if $n/\gcd(j-i,n)$ is odd~\cite{DBLP:journals/dcc/CoulterM97}. Hence, the Dembowski-Ostrom conjecture for monomials over fields of characteristic $p > 3$ can be stated as follows.

\begin{conjecture}
    \label{con:do} If $\F_q$ has characteristic $p>3$, then $X^d$ is a planar monomial over $\F_q$ if and only if $d \equiv p^i + p^j \pmod {q-1}$ with $n/\gcd(j-i,n)$ odd.
\end{conjecture}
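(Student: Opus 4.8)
The plan is to establish the two implications of the equivalence separately, the backward one being already in hand. Indeed, if $d \equiv p^i + p^j \pmod{q-1}$ with $n/\gcd(j-i,n)$ odd, then $X^d$ agrees as a function with the Dembowski--Ostrom monomial $X^{p^i+p^j}$, which is planar by the Coulter--Matthews classification recalled above. I would therefore devote the proof to the forward implication: a monomial $X^d$ that is planar over a field of characteristic $p > 3$ must satisfy $d \equiv p^i + p^j \pmod{q-1}$. Once this is known, the congruence forces $\wt(d) = 2$, and the Coulter--Matthews classification of planar Dembowski--Ostrom monomials supplies the condition that $n/\gcd(j-i,n)$ be odd, completing the equivalence.

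To control $d$, I would translate planarity into an arithmetic condition on the base-$p$ digit sum $\wt(\cdot)$, taken on least positive residues modulo $q-1$, so that $\adeg{X^k} = \wt(k)$ for every $k$. A monomial of digit sum one is $\F_p$-linear and hence not planar, so $\wt(d) \ge 2$ and it remains to rule out $\wt(d) \ge 3$. Since $F = X^d$ is planar, the degree bound gives $\adeg{G\circ F} - \adeg{G} \le n(p-1)/2$ for every nonzero $G$. Specializing to the monomials $G = X^m$ and using $G \circ F = X^{md}$ yields
\begin{equation*}
\wt(md \bmod (q-1)) - \wt(m) \le \frac{n(p-1)}{2} \qquad \text{for all } m \in \{1, \dots, q-1\}.
\end{equation*}
Planarity of $X^d$ has thus been reduced to a uniform bound on how far multiplication by $d$ modulo $q-1$ can inflate digit sums.

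The heart of the matter, and the step I expect to be the genuine obstacle, is now purely number-theoretic: one must show that for characteristic $p > 5$ these inequalities cannot all hold once $\wt(d) \ge 3$. This is precisely the digit-sum conjecture announced in the abstract. The natural strategy is, given any $d$ of digit sum at least three that is not merely a cyclic digit shift $p^i d'$ of a lower-weight exponent, to construct an explicit multiplier $m$ for which the carries generated in forming $md \bmod (q-1)$ push $\wt(md \bmod (q-1))$ above $\wt(m) + n(p-1)/2$. The difficulty is that controlling these carry patterns uniformly in $n$ appears delicate, which is why the argument resists a general proof and is instead isolated as a conjecture.

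For $n = 2^k$ with $p > 5$ I would carry out this construction by hand. The power-of-two structure lets one analyze the digit dynamics modulo $q - 1 = p^{2^k} - 1$ inductively along the tower $\F_{p^2} \subset \F_{p^4} \subset \cdots \subset \F_{p^{2^k}}$, exhibiting for every exponent of digit sum exceeding two a multiplier that violates the displayed bound. This yields the unconditional classification of planar monomials in these degrees, and proving the same digit-sum dichotomy for arbitrary $n$ would upgrade the result to the full conjecture.
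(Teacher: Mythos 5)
Your proposal does not prove the statement, and it could not: this statement is the paper's \Cref{con:do}, which the paper itself leaves open. Your backward implication (via the Coulter--Matthews classification of planar Dembowski--Ostrom monomials) is fine, and your reduction of the forward implication to a digit-sum inequality via \Cref{cor:degreeBoundPlanarMonomial} matches the paper's strategy. But the step you yourself call ``the genuine obstacle'' is exactly the paper's \Cref{conj:extended}, which is stated there as a conjecture, verified only computationally, and proven only in special cases. So what you have written is a reduction of one open conjecture to another, not a proof. The part you claim to carry out by hand, $n = 2^k$ with $p > 5$, does correspond to a theorem the paper actually proves (\Cref{thm:main_power2}), and its proof indeed proceeds by induction along the tower $\F_{p^2} \subset \F_{p^4} \subset \cdots$ with explicitly constructed multipliers; but that settles only degrees that are powers of two, not the general statement.

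There are also two concrete flaws in the reduction itself. First, the number-theoretic claim you propose to establish is false as stated: it is not true that for every $d$ with $\wt(d) \ge 3$ some multiplier $m$ violates the bound. The paper notes that when $d \equiv 1 \pmod{p-1}$ or $d \equiv \lfloor (p+1)/2 \rfloor \pmod{p-1}$ there are many such $d$ satisfying all the inequalities, which is why \Cref{conj:extended} excludes these residue classes. To exclude them legitimately you must first use the subfield restriction argument (\Cref{lem:restriction}): a planar monomial on $\F_q$ restricts to a planar monomial on $\F_p$, and the prime-field classification (\Cref{cor:planarPrimeField}) then forces $d \equiv 2 \pmod{p-1}$. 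Your proposal omits this step entirely. Second, even granting the digit-sum conjecture in full, the method reaches only characteristic $p > 5$: for $p = 5$ the exceptional exponents $d = 5^j(5^i+1)/3$ of case (ii) satisfy every degree-bound inequality, so the degree bound is inherently incapable of deciding their planarity, whereas the statement you are proving asserts the classification for all $p > 3$. Any complete proof must therefore contain an additional idea for $p = 5$ that goes beyond the machinery of this paper.
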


Apart from the classification results for $n\leq 4$, the closest we came to resolving Conjecture~\ref{con:do} was a result by Zieve~\cite{zieve2015planar}, who proved that for a fixed prime $p$, there does not exists an exponent $d$ other than those mentioned in Conjecture~\ref{con:do} (and those of the form $(3^i+3^j)/2$ in the case of $p=3$) such that $X^d$ is planar over infinitely many fields $\F_{p^n}$.   Mentioning the result of Zieve, it is worth remarking that a result of Menichetti on division algebras implies that for $n$ a prime, all planar Dembowski-Ostrom polynomials with $p$ large enough correspond (up to a notion of equivalence) to planar Dembowski-Ostrom monomials~\cite{menichetti1996n}. Note that this is in contrast to the case of composite $n$, as it was shown in~\cite{golouglu2023exponential} that the number of (non-equivalent) planar Dembowski-Ostrom polynomials over $\F_{p^{4m}}$ grows exponentially in $m$.

\paragraph{Our Results.}
The aim of this work is to make progress on the classification of planar functions. We achieve this by proving that the algebraic degree of a planar function $F$ composed with an arbitrary non-zero function $G$ only grows additively with the algebraic degree of $G$.

For a non-negative integer $e$, let $\wt(e)$ be the sum of the digits in the base-$p$ representation of $e$. The \emph{algebraic degree} of a non-zero function $F \colon \F_{q} \rightarrow \F_{q}$, denoted $\adeg{F}$, is defined as the largest value $s_p(i)$ for which $X^i$ has a non-zero coefficient in the unique interpolating polynomial of degree at most $q-1$ for $F$. That is, in terms of representation~\eqref{eq:univariate_representation}, $\adeg{F} = \max\{\wt(i) \mid a_i \neq 0\}$. 
We prove the following result.
\begin{restatable}{theorem}{degreeBoundPlanar} \label{thm:degreeBoundPlanar}
If $F$ is a planar function on $\F_q$, then for all non-zero functions $G : \F_q \to \F_q$, we have
\begin{equation*}
\adeg{G \circ F} - \adeg{G} \le \frac{n(p - 1)}{2}.
\end{equation*}
\end{restatable}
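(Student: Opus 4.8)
The plan is to reduce to the case where $G$ is a monomial and then translate the non-vanishing of a polynomial coefficient into a statement about $p$-adic valuations, where planarity and Stickelberger's theorem can be played against each other. Writing $G = \sum_j b_j Y^j$ and $H = G \circ F$, the coefficient of $X^d$ in the reduced interpolating polynomial of $H$ is $\F_q$-linear in the function $H$, so it equals $\sum_j b_j c_d^{(j)}$, where $c_d^{(j)}$ is the coefficient of $X^d$ in $F(x)^j \bmod (X^q - X)$. Hence, if it is non-zero, then $c_d^{(j)} \neq 0$ for some $j$ with $b_j \neq 0$, and it suffices to prove $\wt(d) \le \wt(j) + \frac{n(p-1)}{2}$ for every such $j$; since $\wt(j) \le \adeg{G}$, the theorem would follow. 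I would fix a prime $\mathcal P$ above $p$ in $\QQ(\zeta_{p(q-1)})$, normalise the valuation $v_{\mathcal P}$ so that $v_{\mathcal P}(p) = p-1$, and write $\omega$ for the Teichmüller character of $\F_q$ and $\psi(x) = \zeta_p^{\tr(x)}$.

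The key bridge is the Teichmüller lift. For $1 \le d \le q-2$ one has $c_d^{(j)} = -\sum_{x \neq 0} F(x)^j x^{-d}$ in $\F_q$; lifting each factor multiplicatively produces the algebraic integer $\tilde S = -\sum_{x \neq 0} \omega(F(x))^j \omega(x)^{-d}$, which reduces to $c_d^{(j)}$ modulo $\mathcal P$. Thus $c_d^{(j)} \neq 0$ is equivalent to $v_{\mathcal P}(\tilde S) = 0$. I would then expand both $\omega^j(F(x))$ and $\omega^{-d}(x)$ by Gauss-sum inversion, $\chi(z) = g(\bar\chi)^{-1}\sum_{t \neq 0}\bar\chi(t)\psi(tz)$, which rewrites $\tilde S$ as
\[
\tilde S = -\frac{1}{g(\omega^{-j})\,g(\omega^{d})} \sum_{t \neq 0}\sum_{s \neq 0} \omega^{-j}(t)\,\omega^{d}(s)\, W(t,s),
\]
where $W(t,s) = \sum_{x \in \F_q} \psi(tF(x) + sx)$ is the Walsh transform of $F$ (the terms coming from $F(0)$ and from the trivial characters drop out because $\sum_{s \neq 0}\omega^{d}(s) = 0$).

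It remains to estimate the valuations of the three pieces. For planarity: for $t \neq 0$ one has $|W(t,s)|^2 = q$, and since every Galois conjugate of $W(t,s)$ is again a Walsh value $W(at,as)$ with $at \neq 0$, all conjugates have absolute value $\sqrt q$; as $\mathcal P$ restricts to the unique (totally ramified) prime of $\QQ(\zeta_p)$ above $p$, it is fixed by the Galois action, so all conjugates share a single valuation, and comparing with the norm forces $v_{\mathcal P}(W(t,s)) = \frac{n(p-1)}{2} = \frac{1}{2} v_{\mathcal P}(q)$. For the denominators, Stickelberger's theorem gives $v_{\mathcal P}(g(\omega^{-a})) = \wt(a)$, whence $v_{\mathcal P}(g(\omega^{-j})) = \wt(j)$ and $v_{\mathcal P}(g(\omega^{d})) = \wt(q-1-d)$. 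Since roots of unity are units, the double sum has valuation at least $\frac{n(p-1)}{2}$, so $v_{\mathcal P}(\tilde S) \ge \frac{n(p-1)}{2} - \wt(j) - \wt(q-1-d)$. Imposing $v_{\mathcal P}(\tilde S) = 0$ and using the digit identity $\wt(q-1-d) = n(p-1) - \wt(d)$ would yield exactly $\wt(d) \le \wt(j) + \frac{n(p-1)}{2}$.

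The main obstacle is conceptual rather than computational: because the Teichmüller lift is multiplicative but not additive, there is no clean way to lift $G(F(x))$ directly, and this is precisely what forces the reduction to the monomials $F(x)^j$ and the term-by-term argument. The second delicate point is converting planarity into the \emph{exact} valuation $v_{\mathcal P}(W(t,s)) = \frac{1}{2} v_{\mathcal P}(q)$; the constant in the theorem is nothing but this halved valuation, and the argument hinges on $p$ being the unique prime below $\mathcal P$ in $\QQ(\zeta_p)$, so that all conjugate Walsh values are forced to the same valuation. The degenerate exponents $d \in \{0, q-1\}$ and $j \in \{0, q-1\}$ fall outside the Gauss-sum inversion above and would be checked directly; each gives the inequality either trivially or by the same computation with the relevant trivial character removed.
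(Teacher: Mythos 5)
Your proposal is correct, and its mathematical core coincides with the paper's proof: both reduce to the powers $F^j$ by linearity of the coefficient map, both convert planarity into the statement that every Walsh value $W(t,s)$ with $t \neq 0$ has $p$-adic valuation exactly half that of $q$, and both finish with Stickelberger's theorem and the ultrametric triangle inequality. The difference is in the packaging, and it is worth noting. The paper works over $\K = \QQ_p(\zeta_{q-1},\zeta_p)$ with an operator formalism: it forms the correlation matrix $C^F = \FT\,T^F\,\FT^{-1}$ and the ultrametric transition matrix $A^F = \UT\,T^F\,\UT^{-1}$, identifies the entries of $A^F$ modulo $p$ with the interpolation coefficients of the $F^j$ by citing a theorem from Beyne's thesis, computes via Stickelberger that the entries of the change-of-basis matrices $\TT = \FT\UT^{-1}$ and $\TT^{-1}$ (which are Gauss sums divided by $q-1$, resp.\ $q$) have valuations $\pm\adeg{\lambda}/(p-1)$, and applies the ultrametric inequality to $A^F = \TT^{-1} C^F\, \TT$. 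Your argument is exactly this computation with the matrices multiplied out: your lifted sum $\tilde S$ plays the role of an entry of $A^F$, your Gauss-sum inversion identity is the coordinate form of conjugation by $\TT$, and your final estimate is the paper's base-conversion lemma specialized to planar $F$. What your route buys is self-containedness and elementary tools: the classical formula $c_d^{(j)} = -\sum_{x\neq 0}F(x)^j x^{-d}$ together with the Teichm\"uller lift replaces the external reduction theorem, and no Fourier analysis on monoids is needed; moreover your deferred degenerate cases all go through ($d=0$, $j=0$ and $j=q-1$ are trivial, and $d=q-1$ follows from a single inversion against $W(t,0)$, where planarity still applies since $t\neq 0$). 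What the paper's route buys is a reusable intermediate statement -- its valuation bound relating $A^F$ to $C^F$ holds for an arbitrary function $F$, with planarity entering only in the last step -- and automatic bookkeeping of the trivial characters inside the matrix framework, which in your version must be handled by hand.
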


The bound for the case that $G$ is the identity function is relatively high and was already known before in the context of bent functions. Recall that a function $f \colon \F_q \rightarrow \F_p$ is called \emph{bent} if
$\smash{\bigl\lvert \sum_{x \in \F_q} \theta^{\tr(f(x)-ux)} \bigr\rvert} = \sqrt{q}$ for all $u$ in $\F_q$, where $\theta$ is a primitive $p$-th root of unity in the complex numbers and $\tr$ denotes the absolute trace on $\F_{q}$.
Hou~\cite{DBLP:journals/ffa/Hou04} proved that a bent function $f \colon \F_{q} \rightarrow \F_p$ fulfills $\adeg{f} \leq n(p-1)/2+1$ (a function $F$ on $\F_{q}$ is planar if and only if all functions $x \mapsto \tr(v F(x))$ with $v$ in $\F_q^\times$ are bent, see~\cite{carlet2001generalized,DBLP:journals/dcc/Pott16}).  An analog of Theorem~\ref{thm:degreeBoundPlanar} for the case of $p=2$ in terms of divisibility of Walsh coefficients of $F$ was shown in~\cite{DBLP:conf/eurocrypt/CanteautV02}. The remarkable part of \Cref{thm:degreeBoundPlanar} is the fact that the algebraic degree of $G \circ F$ grows only additively with the algebraic degree of $G$. This is in contrast with what one expects for functions of low-algebraic degree, namely that the
algebraic degree should be around $\adeg{G} \cdot \adeg{F}$ for large enough $p$ and $F$ of constant algebraic degree. Note that the bound $\adeg{G \circ F} \leq \adeg{G} \cdot \adeg{F}$ implies that the inequality in Theorem~\ref{thm:degreeBoundPlanar} is  fulfilled for any function $F$ with $\adeg{F} \leq 2$, and can therefore only be used to rule out planarity if $\adeg{F} >2$. 

Let us define a binary operation $\star$ on the set $S = \{0,1,\dots,q-1\}$ by
\begin{align*}
    e \star d = \begin{cases}
        0   &\text{ if } 0 \in \{e,d\}\,, \\
        q-1 &\text{ if } 0 \notin \{e,d\} \text{ and } r = 0\,, \\
        r   &\text{ if } 0 \notin \{e,d\} \text{ and } r \neq 0,
    \end{cases}
\end{align*}
where $r$ is the unique non-negative integer smaller than $q-1$ such that $r \equiv ed \pmod {q-1}$. This operation makes $(S,\star)$ into a commutative monoid and we have $\adeg{X^{ed}} = \wt(e \star d)$. \Cref{thm:degreeBoundPlanar} then has the following corollary for planar monomials.

\begin{restatable}{cor}{degreeBoundPlanarMonomial} \label{cor:degreeBoundPlanarMonomial}
If $X^d$ is a planar monomial over $\F_q$ with $0 \le d \le q - 1$, then
\begin{equation*}
\wt(e \star d) - \wt(e) \le \frac{n(p - 1)}{2},
\end{equation*}
for all $e$ in $\{0, 1, \ldots, q - 1\}$.
\end{restatable}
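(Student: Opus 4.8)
The plan is to obtain the corollary as a direct specialization of \Cref{thm:degreeBoundPlanar}, taking for $F$ the planar function $x \mapsto x^d$ and for $G$ a monomial. Fix $e$ in $\{0,1,\dots,q-1\}$ and set $G\colon x \mapsto x^e$, so that the composition $G \circ F$ is the function $x \mapsto x^{ed}$. The only hypothesis of the theorem to check, beyond the planarity of $F$ which is assumed, is that $G$ is a non-zero function; this holds for every $e$ in $S$, including $e = 0$, where $G$ is the non-zero constant function $1$.

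Next I would identify the two algebraic degrees in the theorem with the quantities appearing in the corollary. Since $0 \le e \le q-1$, the polynomial $X^e$ already has degree at most $q-1$, so by uniqueness it is the interpolating polynomial of $G$ and $\adeg{G} = \wt(e)$. For the composition I would invoke the identity $\adeg{X^{ed}} = \wt(e \star d)$ recorded just before the statement: this is exactly what the operation $\star$ is designed to capture, as it reduces the exponent $ed$ modulo $q-1$ while correctly distinguishing the function $x \mapsto x^{q-1}$, of algebraic degree $n(p-1) = \wt(q-1)$, from the zero exponent. Hence $\adeg{G \circ F} = \wt(e \star d)$.

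Substituting these two identities into the bound $\adeg{G \circ F} - \adeg{G} \le n(p-1)/2$ from \Cref{thm:degreeBoundPlanar} yields $\wt(e \star d) - \wt(e) \le n(p-1)/2$, as claimed. Because the argument is purely a specialization, I do not expect a genuine obstacle; the one point that requires care is the passage from the functional composition $x \mapsto x^{ed}$ to its lowest-degree interpolating polynomial, since the exponent $ed$ can exceed $q-1$ and must be reduced correctly. This reduction, together with the $0$-versus-$(q-1)$ distinction, is precisely the content of the $\star$-identity and is already subsumed by the monoid structure on $(S,\star)$, so no separate computation is needed.
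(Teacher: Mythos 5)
Your proposal is correct and follows essentially the same route as the paper: apply \Cref{thm:degreeBoundPlanar} with $F(x) = x^d$ and $G(x) = x^e$, then use the identity $\adeg{X^{ed}} = \wt(e \star d)$ established alongside the definition of $\star$. Your additional checks (that $G$ is non-zero even when $e = 0$, and that the reduction of $ed$ modulo $q-1$ is handled by the $\star$ operation) are careful elaborations of points the paper leaves implicit, but they do not change the argument.
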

This provides a method for proving the non-planarity of a monomial $X^d$, namely by finding an element $e$ with $1 \leq e \leq q-1$ that violates the degree bound, i.e., $\wt(e \star d) - \wt(e) > n(p-1)/2$. Using this technique, we establish again the classification of planar polynomials over $\F_{p}$ and planar monomials over $\F_{p^2}$. In addition, for the first time, we completely classify planar monomials over $\F_{p^{2^k}}$ for $p>5$.

\begin{restatable}{theorem}{powerTwo} \label{thm:main_power2}
Let $k$ be a non-negative integer and $p>5$. The monomial $X^d$ is planar over $\smash{\F_{p^{2^k}}}$ if and only if $d \equiv 2p^i \pmod{p^{2^k} - 1}$ for some non-negative integer $i$.
\end{restatable}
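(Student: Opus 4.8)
The plan is to prove both implications, using the classification of planar Dembowski--Ostrom monomials (namely that $X^{p^i+p^j}$ is planar over $\F_q$ if and only if $n/\gcd(j-i,n)$ is odd) for the degenerate cases, and \Cref{cor:degreeBoundPlanarMonomial} as the engine for the hard case. For the ``if'' direction, suppose $d \equiv 2p^i \pmod{q-1}$. Since $2p^i = p^i + p^i$ is a Dembowski--Ostrom exponent with $j = i$, and $n/\gcd(j-i,n) = n/\gcd(0,n) = 1$ is odd, $X^d$ is planar. (Equivalently, $x^{2p^i} = (x^2)^{p^i}$ is the composition of the planar map $x \mapsto x^2$ with the $\F_p$-linear bijection $x \mapsto x^{p^i}$, and planarity is preserved under such composition.)

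For the ``only if'' direction, let $X^d$ be planar with $0 \le d \le q-1$. Multiplying $d$ by $p$ modulo $q-1$ preserves both planarity (it amounts to composing with the Frobenius automorphism) and the digit sum $\wt(d)$ (it cyclically shifts the base-$p$ digits), so I may normalize $d$ within its cyclotomic coset and argue by the size of $\wt(d)$. If $\wt(d) = 0$ then $d = 0$ and $X^d$ is constant; if $\wt(d) = 1$ then $d = p^i$ and $X^d$ is $\F_p$-linear; neither is planar. If $\wt(d) = 2$, then either $d \equiv 2p^i \pmod{q-1}$, which is the asserted form, or $d \equiv p^i + p^j \pmod{q-1}$ with $i \not\equiv j \pmod n$; in the latter case $X^d$ is a Dembowski--Ostrom monomial, and because $n = 2^k$ forces $\gcd(j-i,n)$ to be a proper divisor $2^m$ with $m < k$ (as $0 < |i-j| < n$), the ratio $n/\gcd(j-i,n) = 2^{k-m}$ is even, so $X^d$ is not planar. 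Thus it remains to rule out every $d$ with $\wt(d) \ge 3$, which is the heart of the proof.

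To do so I would exhibit, for each such $d$, a witness $e \in \{1,\dots,q-1\}$ with $\wt(e \star d) - \wt(e) > n(p-1)/2$, contradicting \Cref{cor:degreeBoundPlanarMonomial}. A first witness is $e = (q-1)/\gcd(d,q-1)$: since $\gcd(d,q-1) \mid d$ one has $ed \equiv 0 \pmod{q-1}$, hence $e \star d = q-1$, whose digit sum is the maximal $n(p-1)$, giving a gap $n(p-1) - \wt(e)$ that already exceeds $n(p-1)/2$ whenever $\wt\bigl((q-1)/\gcd(d,q-1)\bigr) < n(p-1)/2$. Invoking the known necessary condition that a planar power map satisfies $\gcd(d,q-1) = 2$, the only surviving case is $\gcd(d,q-1) = 2$, where $(q-1)/2 = \sum_{i=0}^{n-1}\h\, p^i$ has all digits equal to $\h$ and digit sum exactly $n(p-1)/2$; this witness thus attains equality rather than strict inequality.

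The main obstacle is therefore precisely this borderline regime: $\gcd(d,q-1) = 2$ together with $\wt(d) \ge 3$. Here the ``flat'' witness $(q-1)/2$ is a genuine critical point of the bound, and a naive single-digit perturbation $e = (q-1)/2 - \delta p^t$ fails, since it lowers $\wt(e)$ by $\delta$ only at the cost of $\wt(e \star d) = n(p-1) - \wt(\delta \star d)$, and $\wt(\delta \star d) < \delta$ is impossible in the $\gcd = 2$ case. The extra $+1$ must instead be extracted from the non-uniformity of the digits of $d$ (guaranteed by $\wt(d) \ge 3$) together with the dyadic structure of the positions. My plan is to use the folding realized by $e = 1 + p^{n/2}$, which over $\F_{p^{2^k}}$ sends $d = A + B\,p^{n/2}$ to $(A+B)(1+p^{n/2}) \bmod (q-1)$ and so transports the question to the half-length field $\smash{\F_{p^{2^{k-1}}}}$, and to set up an induction on $k$ whose base cases $\F_p$ and $\F_{p^2}$ are already known. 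Controlling the carries produced when multiplying by the perturbed flat witness is the delicate technical step; it is exactly here that $p > 5$ enters, since $\h \ge 3$ provides the slack needed to both decrease certain digits of $e$ and still force the product's digits up to (or past) the critical profile, and exactly here that $n = 2^k$ enters, through the repeated halving afforded by the dyadic position set.
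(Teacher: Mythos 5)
Your treatment of the routine parts is correct: the ``if'' direction via the Dembowski--Ostrom classification, the normalization by Frobenius shifts, and the elimination of exponents with $\wt(d) \le 2$ other than $2p^i$ (using that $n/\gcd(j-i,n)$ is even when $n = 2^k$ and $i \not\equiv j \pmod n$) all go through, and they match what the paper needs as well. The problem is that everything after that is a plan, not a proof, and it is exactly where the entire difficulty of the theorem sits. Your first witness $e = (q-1)/\gcd(d,q-1)$ rules out nothing: as you observe yourself, a planar monomial satisfies $\gcd(d,q-1)=2$, and then $\wt(e) = n(p-1)/2$, so this witness achieves equality in \Cref{cor:degreeBoundPlanarMonomial} rather than a violation. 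The surviving regime --- $\gcd(d,q-1)=2$ and $\wt(d)\ge 3$ --- is precisely where the paper spends all of Section 4: planarity on the subfield $\F_{p^m}$ ($n = 2m$) together with the induction hypothesis forces $d \equiv 2p^j \pmod{p^m-1}$; \Cref{lem:coulter_lazebnik} then pins the digits of $d$ into three explicit cases; case 2 falls to the bound with $e = 1$; and case 3, the exponents $D(t,u_1,\dots,u_s)$ of \eqref{eq:general_form}, is eliminated only by the explicit witnesses $e = hp^m + h + 1$ and $e = (p-1+h)p^m + h + 1$ for carefully chosen $h$, with the carry analysis of \Cref{lem:technical} feeding into \Cref{lem:power_2_finite} and \Cref{lem:remaining_p7}. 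Your proposed substitute --- perturbations of the flat witness $(q-1)/2$ combined with the folding $e = 1 + p^{n/2}$ --- is never executed; you explicitly defer ``controlling the carries'' as the delicate step, but that step \emph{is} the theorem. Note also that the paper's successful witnesses are not perturbations of $(q-1)/2$ at all: they have digit sum roughly $(r+1)(p-1)+1$, far below $n(p-1)/2$, so there is no evidence that your specific ansatz can be completed.

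Two smaller points. First, your claim that $\wt(\delta \star d) < \delta$ is impossible when $\gcd(d,q-1)=2$ is asserted without justification, and it is not obvious; since you use it only to explain why a naive perturbation fails, it does not break your argument, but it is symptomatic of how much uncontrolled digit/carry arithmetic is being waved at. Second, your folding $e = 1+p^{n/2}$ duplicates information you already have more cleanly from \Cref{lem:restriction} (restriction to the subfield), which is how the paper sets up its induction; the real content is not the reduction to the half-length field but the classification of the exponents that survive it, i.e., \Cref{lem:power_2}, which your proposal leaves unproven.
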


\begin{remark}
A bound similar to the one stated in \Cref{cor:degreeBoundPlanarMonomial} is known for $p=2$ in the context of the proof of the Niho conjecture on maximally nonlinear (almost perfect nonlinear, see e.g.~\cite{hou2004note} for a definition) monomials. More precisely, for $p=2$ and $n = 2m+1$, it is known that an almost perfect nonlinear monomial $X^d$ over $\F_{q}$ is maximally nonlinear if and only if $s_2(e \star d) - s_2(e) \leq m$, see~\cite{hollmann2001proof,hou2004note}. In those works, the proof of the maximal nonlinearity of a certain monomial was established by bounding above $s_2(e \star d) - s_2(e)$ for all $e$ with $1 \leq e \leq 2^n-1$.
\end{remark}

\begin{remark}
    A referee pointed us to~\cite{DBLP:journals/dcc/LangevinV05} for the first paper that uses Stickelberger's theorem directly to study cryptographic properties of monomials in characteristic two. Previous results of this kind resorted to McEliece's weight divisibility theorems for $p$-ary cyclic codes~\cite{MCELIECE197180,DBLP:journals/dm/McEliece72}. The result in~\cite{DBLP:journals/dm/McEliece72} states a general congruence relation for weights in a $p$-ary cyclic code, which can be seen as a generalization of Ax's theorem~\cite{ax1964zeroes}. Note that both~\cite{ax1964zeroes} and~\cite{DBLP:journals/dm/McEliece72} use Stickelberger's result.
\end{remark}

Finally, we state and discuss a conjecture on the sum of base $p$-digits of integers modulo $q-1$ (\Cref{conj:extended}). A proof of this conjecture implies the complete classification of planar monomials over finite fields of characteristic $p>5$.

\section{Degree Bound}
Our proof relies on some ideas that were developed in the context of symmetric-key cryptanalysis, where a variant of Lemma~\ref{lem:baseConversionBound} was proven for functions from $\F_2^n$ to $\F_2^m$ as an application of ultrametric integral cryptanalysis~\cite{Beyne2023,DBLP:conf/asiacrypt/BeyneV24}.
The basic idea is to associate a linear operator to a function on $\F_q$, and to compare the additive and multiplicative Fourier transformations of this operator. Importantly, both Fourier transformations will be defined over a suitable extension of the $p$-adic numbers. 

The motivation for this approach is that the property of being a planar function is inherently related to the additive structure of $\F_q$, and in particular has a simple characterization in terms of the additive Fourier transformation. The notion of algebraic degree is closely related to the multiplicative structure of $\F_q$, because the multiplicative characters of $\F_q$ are (lifted) monomial functions.

\subsection{Additive and Multiplicative Fourier Transformations}
We will define additive and multiplicative Fourier transformations over a local field. This requires some background from number theory, see for example~\cite[Chapter 1]{lang2012cyclotomic} and~\cite[Chapter 3]{koblitz2012p}.
Let $p$ be an odd prime, $\QQ_p$ the field of $p$-adic numbers, and $\F_q$ a field of order $q = p^n$. Let $\zeta_{q - 1}$ be a primitive $(q-1)$\textsuperscript{st} root of unity so that the algebraic extension $\QQ_p(\zeta_{q - 1})$ of $\QQ_p$ has residue field $\F_q$. Throughout this section, we work over the totally ramified extension $\K = \QQ_p(\zeta_{q - 1}, \zeta_p)$ of $\QQ_p(\zeta_{q - 1})$, with $\zeta_p$ a primitive $p$\textsuperscript{th} root of unity.
The field $\K$ is local with uniformizer $\pi$ equal to $\zeta_p - 1$. That is, every nonzero element $x$ in $\K$ can be written as $x = u\,\pi^i$ with $i$ a unique integer and $u$ a unit in the ring of integers of $\K$. 
The valuation of $x$ will be denoted as $\ord_{\pi} x = i$. Since $(\pi)^{p - 1} = (p)$ as ideals of the ring of integers of $\K$, the $p$-adic valuation extends to $\K$ by
\begin{equation*}
\ord_p x = \frac{\ord_{\pi} x}{\ord_{\pi} p} = \frac{\ord_{\pi} x}{p - 1}.
\end{equation*}
The corresponding $p$-adic absolute value of an element $x$ of $\K$ will be denoted by $|x|_p = p^{-\ord_p x}$.
By convention, $\ord_p 0 = \infty$ so that $|x|_p = 0$ if and only if $x = 0$. A similar setup was used in~\cite{DBLP:journals/ffa/Hou04} for the proof of the degree bound for bent functions, but can be traced back further and is already found in the work of Ax~\cite{ax1964zeroes}.

Let $\K[X]$ be the free $\K$-vector space on a finite commutative monoid $X$, and write $\K^X$ for the vector space of functions from $X$ to $\K$. By extending functions on $X$ linearly to all of $\K[X]$, we can think of $\K^X$ as the dual vector space of $\K[X]$.
To avoid confusion between $X$ and $\K[X]$, the standard basis vectors of $\K[X]$ will be denoted by $\delta_x$, where $x \in X$. The corresponding dual basis of $\K^X$ consists of the functions $\delta^x : X \to \K$ such that $\delta^x(y) = 1$ if $x = y$ and zero otherwise.

A character is a homomorphism of monoids $X \to \K$. By a well-known result of Dedekind \cite[\S44]{Dedekind}, characters are linearly independent. Furthermore, the characters of $X$ form a monoid $\widehat{X}$ under pointwise multiplication. It follows from the representation theory of monoids that if $\K$ contains enough roots of unity, then there exist precisely $|X|$ characters if and only if $X$ is a commutative \emph{inverse monoid}~\cite[\S5.2]{Steinberg2016}. A monoid $X$ is inverse if for every $x$ in $X$, there exists a $y$ in $X$ such that $xyx = x$. Hence, if $X$ is an inverse monoid and $\K$ contains enough roots of unity, then its characters form a basis for $\K^X$. This will be true in our setting.

Dually, for a character $\chi$, we can define $\chi^\dual$ as the unique element of $\K[X]$ such that $\psi(\chi^\dual) = 1$ if $\psi = \chi$ and 0 otherwise. Here, we consider $\psi$ as an element of the dual space of $\K[X]$.
We define the Fourier transformation as the change-of-basis transformation from the standard basis of $\K[X]$ to the basis $\smash{\big\{\chi^\dual~|~\chi \in \widehat{X}\big\}}$ as follows.

\begin{definition}[Fourier transformation] \label{def:fourierTransformation}
If $X$ is a finite commutative inverse monoid, then the Fourier transformation on $\K[X]$ is the linear map $\FT_X : \K[X] \to \K[\widehat{X}]$ defined by $\chi^\dual \mapsto \delta_\chi$ for all monoid characters $\chi$ in $\widehat{X}$.
\end{definition}

A common alternative to \Cref{def:fourierTransformation} is to define the Fourier transformation as the dual change-of-basis $\FT_X^{-\dual} : \smash{\K^X \to \K^{\widehat{X}}}$ that maps $\chi$ to $\delta^\chi$, where $\FT_X^{-\dual}$ is the inverse of the adjoint of $\FT_X$.
If $X$ is a group, then it is common practice to identify $\K[X]$ with its dual $\K^X$. Indeed, $\chi$ and its dual $\chi^\dual$ are then the same up to conjugation and scaling.
However, in the multiplicative case, the distinction will be significant for our purposes.

Throughout this paper, $X$ will either be the additive group or the multiplicative monoid of the field $\F_q$. The characters of the additive group are given by $\chi : x \mapsto \smash{\zeta_p^{\tr(ux)}}$ for $u$ in $\F_q$, and the corresponding dual basis element is
\begin{equation*}
\chi^\dual = \frac{1}{q}\sum_{x \in \F_q} \zeta_p^{-\tr(ux)}\,\delta_x.
\end{equation*}
The additive Fourier transformation will be denoted by $\FT$, with the field $\F_q$ assumed to be clear from context.
The multiplicative monoid $\F_q$ is inverse, and its characters are given by $\lambda : x \mapsto \tau(x^i)$ for $i$ in $\{0, 1, \ldots, q - 1\}$. Here, $\tau : \F_q \to \QQ_p(\zeta_{q - 1}) \subset \K$ is the Teichm\"uller character. Recall that $\F_q$ is the residue field of $\K$. By definition $\tau(0) = 0$ and, for $x \neq 0$, $\tau(x)$ is the unique $(q - 1)$\textsuperscript{st} root of unity such that $\tau(x) \equiv x \pmod{p}$. The existence and uniqueness are ensured by Hensel lifting. Restricting the characters of the monoid $\F_q$ to $\F_q^\times$ yields the characters of the multiplicative group $\F_q^\times$.
If $i \neq 0$, then the corresponding dual basis element is
\begin{equation*}
\lambda^\dual = \frac{1}{q - 1} \sum_{x \in \F_q^\times} \tau\big(x^{-i}\big)\,\delta_x + \begin{cases}
-\delta_0  & \text{if}~i = q - 1, \\
0          & \text{else}. \\
\end{cases}
\end{equation*}
If $i = 0$, i.e. $\lambda$ is the trivial character $x \mapsto 1$, then $\lambda^\dual = \delta_0$. The multiplicative Fourier transformation will be denoted by $\UT$, with the field $\F_q$ again assumed to be clear from the context.

\subsection{Linear Maps Corresponding to a Function on \texorpdfstring{$\F_q$}{a Finite Field}}
For a function $F : \F_q \to \F_q$, we define a linear map $T^F : \K[\F_q] \to \K[\F_q]$ by
\begin{equation*}
T^F \,\delta_x = \delta_{F(x)}.
\end{equation*}
The adjoint of $T^F$ is the linear map $T^{F^\dual} : \K^{\F_q} \to \K^{\F_q}$ such that $T^{F^\dual} f = f \circ F$.
The proof of our main result is based on comparing the additive and multiplicative Fourier transformation of the linear map $T^F$.
For brevity, in the following we identify linear operators with their matrix representation relative to the standard basis.
In the cryptanalysis literature, the additive Fourier transformation of $T^F$ is called the \emph{correlation matrix} $C^F$ of $F$, see~\cite{DBLP:conf/fse/DaemenGV94,DBLP:conf/asiacrypt/Beyne21}:
\begin{equation*}
C^F = \FT \,T^F \,\FT^{-1}.
\end{equation*}
It plays a central role in linear cryptanalysis, and its matrix coordinates at additive characters $\chi : x \mapsto \zeta_p^{\tr(ux)}$ and $\psi : x \mapsto \zeta_p^{\tr(vx)}$ are equal to
\begin{equation*}
C^F_{\psi, \chi} = \psi\big(T^F \chi^\dual\big) = \frac{1}{q} \sum_{x \in \F_q} \zeta_p^{\tr(v F(x) - ux)}.
\end{equation*}
As a function of $u$ (or $-u$ depending on conventions) and up to scaling, this is also called the Walsh transform of $x \mapsto \tr(v F(x))$.
The multiplicative Fourier transformation of $T^F$ is called the \emph{ultrametric integral transition matrix} $A^F$ of $F$, see~\cite{DBLP:conf/asiacrypt/BeyneV24,Beyne2023}:
\begin{equation*}
A^F = \UT \,T^F \,\UT^{-1}.
\end{equation*}
It fulfills the role of the correlation matrix in (ultrametric) integral cryptanalysis.
The coordinates of $A^F$ are directly related to the unique interpolating polynomial of $F$ with degree at most $q - 1$.

\begin{theorem}[{\cite[Theorem 5.8]{Beyne2023}}] \label{thm:reduction}
Let $F : \F_q \to \F_q$ be a function on the residue field $\F_q$ of $\QQ_p(\zeta_{q - 1}) \subset \K$. For all $j$ in $\{0, 1, \ldots, q - 1\}$, let $\sum_{i = 0}^{q - 1} a_{i, j}\, X^i$ be an interpolating polynomial of $F^j$ over $\F_q$.
The coordinates of the matrix $A^F$ are integral elements in $\QQ_p(\zeta_{q - 1})$ and
\begin{equation*}
A^F_{\lambda, \mu} \equiv a_{i, j} \pmod{p},
\end{equation*}
for $\lambda : x \mapsto \tau(x^j)$ and $\mu : x \mapsto \tau(x^i)$.
\end{theorem}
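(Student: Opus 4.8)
The plan is to compute the entry $A^F_{\lambda,\mu}$ directly from \Cref{def:fourierTransformation} together with the explicit dual basis elements, and then to reduce the resulting expression modulo $p$ so as to recover the classical interpolation formula over $\F_q$. Exactly as for the correlation matrix, where $C^F_{\psi,\chi}=\psi(T^F\chi^\dual)$, the multiplicative transition matrix has entries $A^F_{\lambda,\mu}=\lambda(T^F\mu^\dual)$, since applying a character $\lambda$ to an element of $\K[\F_q]$ extracts the coefficient of $\lambda^\dual$ in its expansion in the dual basis. First I would substitute the stated formula for $\mu^\dual$ (with $\mu:x\mapsto\tau(x^i)$), apply $T^F\delta_x=\delta_{F(x)}$, and then evaluate $\lambda:y\mapsto\tau(y^j)=\tau(y)^j$. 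Using that $\tau$ is multiplicative on $\F_q^\times$ and that $\tau(x^{-i})=\tau(x)^{-i}$, this gives, for $i\neq 0$,
\begin{equation*}
A^F_{\lambda,\mu}=\frac{1}{q-1}\sum_{x\in\F_q^\times}\tau(x)^{-i}\,\tau(F(x))^{j}\;+\;\begin{cases}-\tau(F(0))^{j}&\text{if }i=q-1,\\[2pt]0&\text{else},\end{cases}
\end{equation*}
while for $i=0$ one has $\mu^\dual=\delta_0$ and hence $A^F_{\lambda,\mu}=\tau(F(0))^{j}$.

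Integrality is then immediate: every $\tau$-value is either a $(q-1)$\textsuperscript{st} root of unity or zero, hence an algebraic integer of $\QQ_p(\zeta_{q-1})$, and $q-1=p^n-1$ is a $p$-adic unit, so $1/(q-1)$ is integral. For the congruence I would reduce modulo the maximal ideal $(p)$ of the ring of integers of $\QQ_p(\zeta_{q-1})$, whose residue field is $\F_q$. This reduction is a ring homomorphism that commutes with the finite sums above, sends each Teichm\"uller lift $\tau(a)$ to $a$, and sends $1/(q-1)$ to the inverse of $q-1\equiv -1\pmod p$, namely $-1$. Writing $g=F^j$, so that $\tau(F(x))^j$ reduces to $F(x)^j=g(x)$, I obtain in $\F_q$
\begin{equation*}
A^F_{\lambda,\mu}\equiv -\sum_{x\in\F_q^\times}x^{-i}\,g(x)\pmod{p}\qquad(1\le i\le q-2),
\end{equation*}
together with the analogous reductions $A^F_{\lambda,\mu}\equiv g(0)$ for $i=0$ and $A^F_{\lambda,\mu}\equiv-\sum_{x\in\F_q}g(x)$ for $i=q-1$ (here $\tau(x)^{-(q-1)}=1$ automatically, so only the $-\delta_0$ correction survives).

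It remains to identify these expressions with the interpolation coefficients $a_{i,j}$ of $g=\sum_{i'=0}^{q-1}a_{i',j}X^{i'}$. Substituting the polynomial and interchanging the order of summation reduces everything to the orthogonality relation $\sum_{x\in\F_q^\times}x^{k}=-1$ when $(q-1)\mid k$ and $0$ otherwise. For $1\le i\le q-2$, only the exponent $i'=i$ satisfies $i'\equiv i\pmod{q-1}$ within the range $0\le i'\le q-1$, so the sum collapses to $a_{i,j}$; evaluating $g$ at $0$ gives $a_{0,j}$; and $-\sum_{x\in\F_q}g(x)=a_{q-1,j}$ because $\sum_{x\in\F_q}x^{i'}$ vanishes for every $i'$ except $i'=q-1$. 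The step I expect to require the most care is precisely this case analysis at the two boundary exponents $i=0$ and $i=q-1$: the monomials $X^0$ and $X^{q-1}$ agree as functions on $\F_q^\times$ but differ at $0$, which is exactly why $\mu^\dual$ carries the extra $-\delta_0$ term when $i=q-1$ and degenerates to $\delta_0$ when $i=0$. Keeping the $x=0$ contribution aligned with that correction term, and checking that the degenerate character $j=0$ (where $g\equiv 1$) is handled consistently, is where the bookkeeping must be done carefully.
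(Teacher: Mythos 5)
Your proof is correct, but it establishes the congruence by a different mechanism than the paper does. The paper never computes the entries of $A^F$ explicitly: it uses the adjoint identity coming directly from $A^F = \UT\,T^F\,\UT^{-1}$, namely $T^{F^\dual}\lambda = \lambda\circ F = \sum_{\mu}A^F_{\lambda,\mu}\,\mu$, evaluates this at every $x$ in $\F_q$, and reduces modulo the maximal ideal; the reduced entries then form the coefficients of \emph{some} interpolating polynomial of degree at most $q-1$ for $F^j$, and the congruence $A^F_{\lambda,\mu}\equiv a_{i,j} \pmod{p}$ follows from the \emph{uniqueness} of such a polynomial, with no case distinctions and no character-sum evaluations. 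You instead work entry by entry: starting from $A^F_{\lambda,\mu}=\lambda(T^F\mu^\dual)$ (the same identity the paper starts from), you substitute the closed formula for $\mu^\dual$, reduce modulo the maximal ideal, and identify the result with $a_{i,j}$ via the orthogonality relation $\sum_{x\in\F_q^\times}x^{k}=-1$ if $(q-1)\mid k$ and $0$ otherwise. In effect you re-derive the classical coefficient-extraction formulas over $\F_q$,
\begin{equation*}
a_{i,j}=-\sum_{x\in\F_q^\times}x^{-i}F^j(x)\quad(1\le i\le q-2),\qquad a_{0,j}=F^j(0),\qquad a_{q-1,j}=-\sum_{x\in\F_q}F^j(x),
\end{equation*}
and verify that the matrix entries reduce to them. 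Your route pays for this with the boundary bookkeeping you flag at $i=0$ and $i=q-1$ (where $X^0$ and $X^{q-1}$ agree on $\F_q^\times$ but not at $0$, matching the $-\delta_0$ correction in $\mu^\dual$), but it buys explicit mod-$p$ formulas for the entries of $A^F$ and makes the role of that correction term transparent; the paper's route is shorter and uniform in $i$, at the cost of leaving those formulas implicit. The integrality argument is essentially the same in both proofs: every coordinate involved lies in the ring of integers of $\QQ_p(\zeta_{q-1})$, since the $\tau$-values are roots of unity or zero and $q-1$ is a $p$-adic unit.
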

\begin{proof}
Every coordinate $A^F_{\lambda, \mu} = \lambda\big(T^F \,\mu^\dual\big)$ is an integral element in $\QQ_p(\zeta_{q - 1})$ because the coordinates of $T^F$, $\mu^\dual$ and $\lambda$ are all  in the ring of integers of $\QQ_p(\zeta_{q - 1})$. By the definition of $A^F$, the function $\lambda \circ F$ in $\K^{\F_q^n}$ is equal to
\begin{equation*}
T^{F^\dual} \lambda = \sum_{\mu} A^F_{\lambda, \mu} \; \mu,
\end{equation*}
where the sum is over all multiplicative characters of $\F_q$. That is, for every $\mu$ there exists an $i$ in $\{0, 1, \ldots, q - 1\}$ such that $\mu(x) = \tau(x^i)$ for all $x$ in $\F_q$. Evaluating $\lambda \circ F$ at $x$ and reducing modulo the maximal ideal $(p)$ of $\ZZ_p[\zeta_{q - 1}]$ gives
\begin{equation*}
F^j(x) \equiv \sum_{\substack{\mu\, :\, x \,\mapsto\, \tau(x^i)}} A^F_{\lambda, \mu}\;x^i \pmod{p}.
\end{equation*}
Since $F^j$ has a unique interpolating polynomial of degree at most $q - 1$ over $\F_q$, we can conclude that $A^F_{\lambda, \mu} \equiv a_{i, j} \pmod{p}$.
\end{proof}

The matrices $C^F$ and $A^F$ are both similar to $T^F$, hence similar to each other. More precisely, let $\TT = \FT \UT^{-1}$, then $C^F = \TT\,A^F\,\TT^{-1}$. In the following section, we analyze the change-of-basis matrices $\TT$ and $\TT^{-1}$ in detail.

\subsection{Bounds for Change-of-Basis}
The Gauss sum corresponding to an additive character $\chi$ and a multiplicative character $\lambda$ of $\F_q$ is defined as (the minus sign is a standard convention)
\begin{equation*}
G(\chi, \lambda) = -\sum_{x \,\in\, \F_q^\times} \chi(x)\,\lambda(x).
\end{equation*}
Such sums are well-understood. In particular, we have the following classical result on the divisibility of $G(\chi, 1/\lambda)$ by $p = -\pi^{p - 1}$, attributed to Stickelberger from the 19th century~\cite{stickelberger}.
By the algebraic degree of a multiplicative character $\lambda$, we mean the algebraic degree of the corresponding monomial. That is, if $\lambda(x) = \tau(x^k)$ with $0 \le k \le q - 1$, then $\adeg{\lambda} = \adeg{X^k} = s_p(k)$.

\begin{theorem}[Stickelberger's theorem for Gauss sums~{\cite[Chapter 1, Theorem 2.1]{lang2012cyclotomic}}] \label{thm:stickelberger}
For all nontrivial additive characters $\chi$ of $\F_q$ and all multiplicative characters $\lambda$ of $\F_q$ except $\lambda : x \mapsto \tau(x^{q - 1})$, the Gauss sum $G(\chi, 1/\lambda)$ satisfies
\begin{equation*}
\ord_p G(\chi, 1/\lambda) = \frac{\adeg{\lambda}}{p - 1}.
\end{equation*}
Furthermore, if $\lambda$ is $x \mapsto \tau(x^{q - 1})$, then $\ord_p G(\chi, 1/\lambda) = 0$.
\end{theorem}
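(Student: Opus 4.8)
The plan is to prove the valuation formula by a direct $\pi$-adic analysis of the Gauss sum, reducing the general field $\F_q$ to the prime field $\F_p$, where the digit sum becomes transparent. First I would record two reductions. Since any two nontrivial additive characters differ by a multiplicative twist, $\chi(x) = \chi_0(cx)$ for a fixed $\chi_0$ and some $c \in \F_q^\times$, and this twist multiplies $G(\chi, 1/\lambda)$ by the root of unity $\lambda(c)$, the valuation $\ord_p G(\chi, 1/\lambda)$ is independent of $\chi$; so I may fix $\chi_0(x) = \zeta_p^{\tr(x)}$. Second, the exceptional character $\lambda : x \mapsto \tau(x^{q-1})$ is trivial on $\F_q^\times$, so $G(\chi,1/\lambda) = -\sum_{x \in \F_q^\times}\chi(x) = \chi(0) = 1$, giving $\ord_p G = 0$ and settling the ``furthermore'' claim. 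It then remains to treat $\lambda : x \mapsto \tau(x^k)$ with $1 \le k \le q-2$, where $1/\lambda(x) = \tau(x)^{-k}$ and the target becomes $\ord_\pi G = \wt(k)$, using $\ord_\pi p = p-1$.

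For the main case I would exploit two structural features of Gauss sums. Frobenius invariance of $\chi_0$ leaves $G$ unchanged when $k$ is replaced by $pk \bmod (q-1)$, which merely cyclically permutes the base-$p$ digits and fixes $\wt(k)$; this normalizes the problem. The core step is a multiplicative decomposition: via the Hasse--Davenport product relation (equivalently, an iterated Jacobi-sum identity), I would factor the $\F_q$-Gauss sum, up to an explicit $p$-adic unit, as a product of prime-field Gauss sums indexed by the digits $a_0, \dots, a_{n-1}$ of $k = \sum_i a_i p^i$. The prime-field Gauss sum attached to a digit $a$ with $0 \le a \le p-1$ is $g_0(a) = -\sum_{t \in \F_p^\times}\tau(t)^{-a}\zeta_p^{t}$, whose valuation I would compute by hand: expanding $\zeta_p^t = (1+\pi)^t = \sum_j \binom{t}{j}\pi^j$ and using $\tau(t) \equiv t \pmod p$ together with the power-sum congruence $\sum_{t \in \F_p^\times} t^m \equiv -1 \pmod p$ when $(p-1) \mid m$ and $0$ otherwise, the lowest surviving power is $\pi^{a}$ with coefficient the $p$-adic unit $-1/a!$ (here $a \le p-1$ makes $a!$ invertible modulo $p$). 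Hence $\ord_\pi g_0(a) = a$, and multiplying over the digits yields $\ord_\pi G = \sum_i a_i = \wt(k)$.

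The main obstacle is obtaining \emph{equality} rather than a mere lower bound, which amounts to showing that the leading $\pi$-adic coefficient never cancels; this is exactly the content of Stickelberger's congruence $G \equiv \pi^{\wt(k)}/\prod_i a_i! \pmod{\pi^{\wt(k)+1}}$, and its proof is where the real work lies --- either through careful bookkeeping of units in the Hasse--Davenport reduction, or, more conceptually, through Dwork's splitting of the additive character into a product of Artin--Hasse-type factors over the $n$ Frobenius conjugates, which is the route underlying the Gross--Koblitz formula. The reason the exceptional exponent $k = q-1$ must be excluded fits this picture cleanly: all its digits equal $p-1$, so the naive formula would predict $\ord_\pi = n(p-1)$, whereas $\tau(x^{q-1})$ collapses to the trivial character on $\F_q^\times$ and the product decomposition degenerates. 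Verifying that this degeneration is the only obstruction, and that for every $k \le q-2$ the leading coefficient $\prod_i(-1/a_i!)$ is genuinely a unit, completes the argument.
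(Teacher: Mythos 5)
Your preliminary reductions are correct: the valuation of $G(\chi,1/\lambda)$ is independent of the nontrivial $\chi$ (twisting by $c$ multiplies the sum by the unit $\lambda(c)$), the exceptional character gives $G(\chi,1/\lambda^*)=-\sum_{x\in\F_q^\times}\chi(x)=1$, and Frobenius invariance $k\mapsto pk \bmod (q-1)$ preserves both the sum and $s_p(k)$. Note also that the paper itself offers no proof of this statement --- it is quoted as a classical theorem with a citation to Lang --- so your sketch must be judged as a free-standing argument.

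As such it has a genuine gap, located exactly at its core step. The claimed factorization of the $\F_q$-Gauss sum, up to a $p$-adic unit, into \emph{prime-field} Gauss sums indexed by the base-$p$ digits of $k$ is false, and no Hasse--Davenport identity produces it (the lifting relation only applies to characters pulled back through the norm, i.e.\ exponents with all digits equal; the product relation needs a character of order $m$ with $m \mid q-1$, which excludes $m=p$). Concretely, take $q=p^2$ and $k=p-1$, with digits $(p-1,0)$: the theorem asserts $\ord_\pi G = s_p(p-1)=p-1$, yet both prime-field sums attached to the digits are units --- indeed $g_0(p-1)=-\sum_{t\in\F_p^\times}\tau(t)^{-(p-1)}\zeta_p^t=-\sum_{t}\zeta_p^t=1$ --- so any ``product of prime-field Gauss sums times a unit'' is a unit. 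This example also shows that your hand computation $\ord_\pi g_0(a)=a$ breaks at the digit $a=p-1$ (the $j=0$ term $\sum_t \tau(t)^{-(p-1)}\binom{t}{0}\equiv -1 \pmod{p}$ already survives), a case you cannot exclude since digits $p-1$ occur for plenty of $k\le q-2$ once $n\ge 2$. What iterating the Jacobi-sum identity $g(a)g(b)=J(a,b)\,g(a+b)$ actually yields is a product of $\F_q$-Gauss sums with single-digit exponents; these are not prime-field sums, and your $\F_p$-expansion does not compute their valuation, because the exponent of $\zeta_p$ is $\tr(u)$, which modulo $p$ is a polynomial of degree $p^{n-1}$ in $u$ --- this is precisely why the theorem over $\F_q$ is harder than over $\F_p$. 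Finally, even granting a correct factorization, the entire content of the theorem is that the correction factors are units (equivalently, that $J(a,b)$ is a $p$-adic unit whenever $a+b$ has no carrying), and your proposal explicitly defers this point (``where the real work lies''), which is to defer the theorem itself. The standard way to close the gap is the congruence $J(a,b)\equiv \pm\binom{q-1-b}{a} \pmod{\pi}$ together with Lucas' theorem, after which induction peeling off one $p^i$ at a time --- with base case $\ord_\pi g(1)=1$ proved by a direct mod-$\pi^2$ expansion over $\F_q$ --- gives $\ord_\pi G = s_p(k)$; alternatively one proves Stickelberger's congruence via Dwork's splitting function. As written, your argument establishes the exceptional case and a (partly flawed) prime-field case, but not the statement over $\F_q$.
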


The following lemma uses \Cref{thm:stickelberger} to bound the $p$-adic absolute values of the coordinates of the change-of-basis matrices $\TT = \FT \UT^{-1}$ and $\TT^{-1} = \UT \FT^{-1}$.

\begin{lemma} \label{lem:baseConversion}
For the matrix $\TT = \FT \UT^{-1}$ and its inverse $\TT^{-1}$, we have
\begin{equation*}
\ord_p \TT_{\chi, \lambda} =  \frac{\adeg{\lambda}}{p - 1},
\quad\mathrm{and}\quad
\ord_p \TT^{-1}_{\lambda, \chi} = -\frac{\adeg{\lambda}}{p - 1},
\end{equation*}
for every nontrivial character $\chi$ of the additive group $\F_q$ and every nontrivial character $\lambda$ of the multiplicative monoid $\F_q$.
Furthermore, $\TT_{1, \lambda} = 0$ for all nontrivial $\lambda$.
\end{lemma}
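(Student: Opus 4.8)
The plan is to compute the coordinates $\TT_{\chi,\lambda}$ and $\TT^{-1}_{\lambda,\chi}$ explicitly as character sums, recognize these as Gauss sums, and apply \Cref{thm:stickelberger}. The first step is to unwind the two Fourier transformations as change-of-basis maps. Since $\FT$ sends $\chi^\dual \mapsto \delta_\chi$ and $\UT$ sends $\lambda^\dual \mapsto \delta_\lambda$, the coordinate $\TT_{\chi,\lambda}$ is the coefficient of $\chi^\dual$ in the expansion of $\UT^{-1}\delta_\lambda = \lambda^\dual$ in the additive dual basis. Using the defining duality (namely $\chi(\chi'^\dual)$ equals $1$ if $\chi = \chi'$ and $0$ otherwise), this coefficient is just the pairing $\TT_{\chi,\lambda} = \chi(\lambda^\dual)$, with $\chi$ extended linearly to $\K[\F_q]$; dually, $\TT^{-1}_{\lambda,\chi} = \lambda(\chi^\dual)$.

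Next I would substitute the explicit dual-basis formulas given above. For a nontrivial $\lambda : x \mapsto \tau(x^i)$ with $i \neq q-1$, inserting the formula for $\lambda^\dual$ into $\chi(\lambda^\dual)$ and using $\tau(x^{-i}) = (1/\lambda)(x)$ identifies the resulting sum with a Gauss sum, giving $\TT_{\chi,\lambda} = -G(\chi, 1/\lambda)/(q-1)$. As $q-1$ is a unit, $\ord_p \TT_{\chi,\lambda} = \ord_p G(\chi, 1/\lambda) = \adeg{\lambda}/(p-1)$ immediately by \Cref{thm:stickelberger}. The same substitution into $\TT^{-1}_{\lambda,\chi} = \lambda(\chi^\dual)$ yields $\TT^{-1}_{\lambda,\chi} = -G(\comp{\chi}, \lambda)/q$, where $\comp{\chi} : x \mapsto \zeta_p^{-\tr(ux)}$ is the conjugate additive character. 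Writing $\lambda$ as the inverse of $1/\lambda$ and applying Stickelberger gives $\ord_p G(\comp{\chi}, \lambda) = \adeg{1/\lambda}/(p-1) = s_p(q-1-i)/(p-1)$. Subtracting $\ord_p q = n$ and invoking the digit-complement identity $s_p(i) + s_p(q-1-i) = n(p-1)$ --- valid because the base-$p$ digits of $q-1 = p^n - 1$ are all equal to $p-1$ --- produces $\ord_p \TT^{-1}_{\lambda,\chi} = -s_p(i)/(p-1) = -\adeg{\lambda}/(p-1)$.

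I expect the main obstacle to be the exceptional character $\lambda : x \mapsto \tau(x^{q-1})$, which is exactly the one excluded from \Cref{thm:stickelberger} and, not by accident, the one whose dual element $\lambda^\dual$ carries the extra $-\delta_0$ term encoding the monoid-versus-group distinction. Here the Gauss-sum identity fails verbatim, so I would handle this case by direct computation: using $\tau(x^{q-1}) = 1$ on $\F_q^\times$ together with the additive orthogonality $\sum_{x \in \F_q^\times}\chi(x) = -1$ for nontrivial $\chi$, one obtains $\TT_{\chi,\lambda} = -q/(q-1)$ and $\TT^{-1}_{\lambda,\chi} = -1/q$, of valuations $n$ and $-n$. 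These match the asserted formulas since $\adeg{\lambda} = s_p(q-1) = n(p-1)$.

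It remains to check $\TT_{1,\lambda} = 0$ for nontrivial $\lambda$, which I would verify by setting $\chi = 1$ in $\TT_{1,\lambda} = 1(\lambda^\dual)$: for generic $\lambda = \tau(x^i)$ the pairing collapses to $\frac{1}{q-1}\sum_{x \in \F_q^\times}\tau(x^{-i})$, which vanishes by multiplicative orthogonality since $1/\lambda$ is a nontrivial character of $\F_q^\times$, while the case $i = q-1$ is a direct computation that also gives zero.
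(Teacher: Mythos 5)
Your proposal is correct and takes essentially the same route as the paper's proof: both compute $\TT_{\chi,\lambda} = \chi(\lambda^\dual)$ and $\TT^{-1}_{\lambda,\chi} = \lambda(\chi^\dual)$ from the dual-basis formulas, identify these pairings with Gauss sums $-G(\chi,1/\lambda)/(q-1)$ and $-G(1/\chi,\lambda)/q$, and apply Stickelberger's theorem together with the complement identity $\adeg{1/\lambda} = n(p-1) - \adeg{\lambda}$. Your separate direct treatment of the exceptional character $\lambda^* : x \mapsto \tau(x^{q-1})$ and of $\chi = 1$ matches the paper's case distinction, with the same values $-q/(q-1)$ and $-1/q$.
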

\begin{proof}
Throughout the proof, the multiplicative character $\lambda^* : x \mapsto \tau(x^{q - 1})$ will be a special case.
By the definition of $\UT$ and $\FT$, we have $\FT^\dual\,\delta^\chi = \chi$ and $\UT^{-1} \delta_\lambda = \lambda^\dual$. Hence,
\begin{align*}
\TT_{\chi, \lambda} = \delta^\chi\Big(\FT\,\UT^{-1}\delta_{\lambda}\Big) = \chi(\lambda^\dual)
&= -\delta_\lambda(\lambda^*) + \frac{1}{q - 1} \sum_{x \in \F_q^\times} \chi(x)/\lambda(x) \\
&= -\delta_\lambda(\lambda^*) - \frac{G(\chi, 1/\lambda)}{q - 1}.
\end{align*}
Note that $\TT_{\chi,\lambda^*} = - 1 -1 / (q - 1) = -q/(q-1)$, so $\ord_p \TT_{\chi,\lambda^*} = n = \adeg{\lambda^*}/(p-1)$.
If $\lambda \neq \lambda^*$, then \Cref{thm:stickelberger} yields
\begin{equation*}
\ord_p \TT_{\chi, \lambda} = \frac{\adeg{\lambda}}{p - 1}.
\end{equation*}
The result for $\chi = 1$ follows by a similar case distinction between $\lambda = \lambda^*$ and $\lambda \neq \lambda^*$.
The proof for $\TT^{-1}$ follows the same argument as above. Since $\UT^\dual \delta^\lambda = \lambda$ and $\FT^{-1} \delta_\chi = \chi^\dual$, we have
\begin{equation*}
\TT^{-1}_{\lambda, \chi} = \delta^\lambda\Big(\UT \FT^{-1}\,\delta_{\chi}\Big) = \lambda(\chi^\dual) = \frac{1}{q} \sum_{x \in \F_q^\times} \lambda(x) / \chi(x) = -\frac{G(1/\chi, \lambda)}{q}.
\end{equation*}
If $\lambda = \lambda^*$, then $\ord_p \TT^{-1}_{\lambda^*, \chi} = 0 - n = -\adeg{\lambda} /(p - 1)$.
Otherwise, using \Cref{thm:stickelberger} we get
\begin{equation*}
\ord_p \TT^{-1}_{\lambda, \chi} = \frac{\adeg{1/\lambda}}{p - 1} - n = \frac{n(p - 1) - \adeg{\lambda}}{p - 1} - n = -\frac{\adeg{\lambda}}{p - 1}.
\end{equation*}
In the second equality, we use the fact that $\adeg{1/\lambda} = n(p - 1) - \adeg{\lambda}$ for nontrivial $\lambda \neq \lambda^*$.
\end{proof}

\Cref{lem:baseConversion} implies the following theorem that bounds above the $p$-adic absolute value of the coordinates of $A^F$ in terms of those of $C^F$.
In \Cref{subsec:degreeBound}, it will be shown that the degree bound for planar functions is a special case of \Cref{lem:baseConversionBound}.

\begin{lemma} \label{lem:baseConversionBound}
Let $F$ be a function on $\F_q$. For all nontrivial multiplicative characters $\lambda$ and $\mu$ of $\F_q$, we have
\begin{equation*}
\ord_p A^F_{\mu, \lambda} \ge \frac{\adeg{\lambda} - \adeg{\mu}}{p - 1} + \min_{\chi \neq 1, \psi \neq 1} \ord_p C^F_{\psi, \chi},
\end{equation*}
where the minimum is over all nontrivial additive characters of $\F_q$.
\end{lemma}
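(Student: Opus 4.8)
The plan is to exploit the similarity relation $C^F = \TT A^F \TT^{-1}$ recorded at the end of Section~2.2, rewritten as $A^F = \TT^{-1} C^F \TT$, and to read off the coordinate $A^F_{\mu, \lambda}$ as a sum over additive characters. Expanding the matrix product gives
\begin{equation*}
A^F_{\mu, \lambda} = \sum_{\psi, \chi} \TT^{-1}_{\mu, \psi}\, C^F_{\psi, \chi}\, \TT_{\chi, \lambda},
\end{equation*}
where $\psi$ and $\chi$ range over all additive characters of $\F_q$. Since $\ord_p$ is a non-Archimedean valuation, the valuation of the sum is at least the minimum of the valuations of its terms; and for nontrivial $\psi,\chi$ each term splits, via \Cref{lem:baseConversion}, into a contribution $-\adeg{\mu}/(p-1)$ from $\TT^{-1}_{\mu,\psi}$, a contribution $\adeg{\lambda}/(p-1)$ from $\TT_{\chi, \lambda}$ (both independent of $\psi$ and $\chi$), and a factor $\ord_p C^F_{\psi, \chi}$. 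Pulling the two character-independent contributions out of the minimum then yields exactly the claimed bound.

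The one point requiring care — and the step I expect to be the real content — is that \Cref{lem:baseConversion} only controls $\TT_{\chi, \lambda}$ and $\TT^{-1}_{\mu, \psi}$ when $\chi$ and $\psi$ are nontrivial, whereas the minimum in the statement is over nontrivial additive characters only. I therefore need to show that every term of the sum involving the trivial additive character vanishes. For the terms with $\chi = 1$ this is immediate from the last assertion of \Cref{lem:baseConversion}, namely $\TT_{1, \lambda} = 0$ for nontrivial $\lambda$. For the remaining terms with $\psi = 1$ and $\chi \neq 1$, I would use the explicit formula for the correlation matrix: since $C^F_{\psi, \chi} = \frac{1}{q}\sum_{x \in \F_q}\zeta_p^{\tr(vF(x) - ux)}$ with $\psi : x \mapsto \zeta_p^{\tr(vx)}$, setting $v = 0$ leaves $C^F_{1, \chi} = \frac{1}{q}\sum_{x \in \F_q} \zeta_p^{-\tr(ux)}$, which is the orthogonality sum of a nontrivial additive character and hence equals $0$ whenever $\chi \neq 1$ (equivalently, $T^{F^\dual} 1 = 1 \circ F = 1$, so the trivial additive character pulls back to itself). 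This vanishing is crucial, because $\TT^{-1}_{\mu, 1}$ itself need not be zero — for instance it is nonzero when $\mu$ is the character $x \mapsto \tau(x^{q-1})$ — so without $C^F_{1, \chi} = 0$ the trivial-character terms could not simply be discarded.

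With all trivial-character terms removed, the sum reduces to
\begin{equation*}
A^F_{\mu, \lambda} = \sum_{\psi \neq 1,\, \chi \neq 1} \TT^{-1}_{\mu, \psi}\, C^F_{\psi, \chi}\, \TT_{\chi, \lambda},
\end{equation*}
and applying the non-Archimedean inequality together with the valuations $\ord_p \TT^{-1}_{\mu, \psi} = -\adeg{\mu}/(p-1)$ and $\ord_p \TT_{\chi, \lambda} = \adeg{\lambda}/(p-1)$ from \Cref{lem:baseConversion} gives
\begin{equation*}
\ord_p A^F_{\mu, \lambda} \ge \frac{\adeg{\lambda} - \adeg{\mu}}{p - 1} + \min_{\chi \neq 1,\, \psi \neq 1} \ord_p C^F_{\psi, \chi},
\end{equation*}
as desired. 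The only genuine obstacle is the bookkeeping of the trivial characters described above; once that is settled, the bound follows purely from the ultrametric property of $\ord_p$ and the exact valuations already computed in \Cref{lem:baseConversion}.
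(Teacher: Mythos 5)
Your proposal is correct and follows essentially the same route as the paper: expand $A^F = \TT^{-1} C^F \TT$, apply the ultrametric inequality, discard the $\chi = 1$ terms via $\TT_{1,\lambda} = 0$ and the $\psi = 1$ terms via $C^F_{1,\chi} = 0$, then invoke \Cref{lem:baseConversion}. The only difference is that you justify $C^F_{1,\chi} = 0$ explicitly by character orthogonality (and correctly flag why this is needed, since $\TT^{-1}_{\mu,1}$ can be nonzero, e.g.\ for $\mu : x \mapsto \tau(x^{q-1})$), whereas the paper states this vanishing without proof.
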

\begin{proof}
The result follows from the ultrametric triangle inequality. More precisely,
\begin{equation*}
\ord_p A^F_{\mu, \lambda} = \ord_p \big(\TT^{-1}\,C^F\,\TT\big)_{\mu, \lambda}\ge \min_{\chi, \psi}\left( \ord_p \TT^{-1}_{\mu, \psi} + \ord_p C^F_{\psi, \chi} + \ord_p \TT_{\chi, \lambda}\right).
\end{equation*}
It can be assumed that $\chi \neq 1$ since $\TT_{1, \lambda} = 0$ for nontrivial $\lambda$.
Furthermore, $C^F_{1, \chi} = 0$ for $\chi \neq 1$. Hence, it can also be assumed that $\psi \neq 1$.
Applying \Cref{lem:baseConversion} to the first and last term in the sum concludes the proof.
\end{proof}

\subsection{Degree Bound} \label{subsec:degreeBound}
The field $\K$ has an automorphism $x \mapsto \overline{x}$ defined by $\zeta_p \mapsto 1/\zeta_p$. Given an embedding of $\K$ into $\CC$, it corresponds to complex conjugation.
Note that $x$ and $\overline x$ always have the same $p$-adic valuation.
Recall that $F$ is a planar function on $\F_q$ if and only if $x \mapsto \tr(vF(x))$ is bent for all nonzero $v$ in $\F_q$ (see~\cite{carlet2001generalized,DBLP:journals/dcc/Pott16}).
Equivalently, for all nontrivial additive characters $\chi$ and $\psi$ of $\F_q$,
\begin{equation*}
\overline{C^F_{\psi, \chi}} C^F_{\psi, \chi} = 1/q.
\end{equation*}
It terms of $p$-adic valuations, we have $\ord_p \smash{C^F_{\psi, \chi}} = -n/2$. \Cref{lem:baseConversionBound} can now be applied to bound above the $p$-adic absolute value of the coordinates of $A^F$.
This upper bound leads to \Cref{thm:degreeBoundPlanar} below.

\degreeBoundPlanar*
\begin{proof}
Let $\mu(x) = \tau(x^i)$ and $\lambda(x) = \tau(x^j)$. From \Cref{lem:baseConversionBound}, we find that
\begin{equation*}
\ord_p A^F_{\mu, \lambda} \ge \frac{\wt(j) - \wt(i)}{p - 1} - n/2 = \frac{\wt(j) - \wt(i)- (p - 1)n/2}{p - 1}.
\end{equation*}
By \Cref{thm:reduction}, $A^F_{\mu, \lambda}$ is an element of $\ZZ_p[\zeta_{q - 1}]$. Hence, it reduces to zero in the residue field $\F_q$ whenever $\ord_p A^F_{\mu, \lambda} > 0$. This implies that $x^j$ can only be a monomial in the unique interpolating polynomial of degree at most $q - 1$ for $x \mapsto F(x)^i$ if
\begin{equation*}
\wt(j) - \wt(i) - (p - 1) n/2 \le 0.
\end{equation*}
Hence, the algebraic degree of any monomial with nonzero coefficients in the unique interpolating polynomial of degree at most $q - 1$ for $x \mapsto F(x)^i$ is at most $(p - 1) n / 2 + \wt(i)$. If $G(x) = \sum_{i = 0}^{q - 1} a_i\,x^i$, then $G(F(x)) = \sum_{i = 0}^{q - 1} a_i\,F^i(x)$. Since the algebraic degree of $x \mapsto F^i(x)$ is at most $(p - 1)n/2 + \wt(i)$, the algebraic degree of $G \circ F$ is bounded above by 
\begin{equation*}
\adeg{G\circ F} \le (p - 1) n/2 + \max_{a_i \neq 0} \wt(i) = (p - 1) n/2 + \adeg{G}.
\end{equation*}
The result follows by rearranging terms.
\end{proof}

If $F$ and $G$ are monomial functions, then \Cref{thm:degreeBoundPlanar} can be simplified as follows.
\degreeBoundPlanarMonomial*
\begin{proof}
Let $F(x) = x^d$ and $G(x) = x^e$. Since $(G \circ F)(x) = x^{e \star d}$, we have $\adeg{G \circ F} = \wt(e \star d)$.
The result follows from \Cref{thm:degreeBoundPlanar}.
\end{proof}

\section{Planar Polynomials and Monomials over \texorpdfstring{$\F_{p}$}{Fields of Prime Order} and \texorpdfstring{$\F_{p^2}$}{Finite Fields of Degree 2}}
\label{sec:classification_prime}
Using the degree bound stated in \Cref{thm:degreeBoundPlanar}, we again establish the classification of planar polynomials over fields of prime order and planar monomials over fields of  prime-square order. Those results were originally obtained in~\cite{DBLP:journals/dm/Gluck90,hiramine1989conjecture,ronyai1989planar} and~\cite{coulter2006classification}, respectively. For the classification of planar monomials over extensions of prime-order fields, one resorts to the following basic fact, which is immediate from the definition of planarity.

\begin{lemma}
\label{lem:restriction}
    Let $\mathbb{S}$ be a subfield of $\F_q$ and let $F$ be a planar function on $\F_q$ so that the coefficients of its interpolating polynomial are in $\mathbb{S}$. The restriction of $F$ to $\mathbb{S}$ is planar on $\mathbb{S}$. 
\end{lemma}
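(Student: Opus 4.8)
The plan is to reduce the statement directly to the definition of planarity, with the only point requiring genuine care being that the relevant derivative maps send $\mathbb{S}$ to itself. First I would check that the restriction $F|_{\mathbb{S}}$ is well-defined as a map $\mathbb{S} \to \mathbb{S}$: since the interpolating polynomial of $F$ has all of its coefficients in $\mathbb{S}$ and $\mathbb{S}$ is a subfield, hence closed under addition and multiplication, evaluating this polynomial at any element of $\mathbb{S}$ again produces an element of $\mathbb{S}$.

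Next I would fix an arbitrary nonzero $\alpha \in \mathbb{S}$ and consider the derivative map $D_\alpha(x) = F(x + \alpha) - F(x) - F(\alpha)$, exactly as in~\eqref{eq:derivatives}. Because $F$ is planar on $\F_q$ and $\alpha \neq 0$, the map $D_\alpha$ is a permutation of $\F_q$, and in particular it is injective on $\F_q$. Moreover, for $x \in \mathbb{S}$ and $\alpha \in \mathbb{S}$, all three terms $F(x + \alpha)$, $F(x)$, and $F(\alpha)$ lie in $\mathbb{S}$ by the well-definedness established above, so $D_\alpha$ restricts to a map $\mathbb{S} \to \mathbb{S}$.

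Finally, an injective self-map of a finite set is automatically a bijection, so $D_\alpha|_{\mathbb{S}}$ is a permutation of $\mathbb{S}$. Since the derivative of $F|_{\mathbb{S}}$ in the direction $\alpha$ is precisely $D_\alpha|_{\mathbb{S}}$, and this argument applies to every nonzero $\alpha \in \mathbb{S}$, the restriction $F|_{\mathbb{S}}$ meets the definition of planarity on $\mathbb{S}$. As the authors indicate, there is no substantive obstacle here; the only subtlety worth flagging is that one must require $\alpha$ (and not merely the argument $x$) to lie in $\mathbb{S}$, since it is this that guarantees $F(\alpha) \in \mathbb{S}$ and hence that $D_\alpha$ preserves the subfield $\mathbb{S}$.
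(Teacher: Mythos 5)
Your proof is correct and follows exactly the argument the paper has in mind: the paper states this lemma without proof, calling it ``immediate from the definition of planarity,'' and your write-up simply makes that argument explicit (the derivative maps preserve $\mathbb{S}$ because the coefficients lie in $\mathbb{S}$, they inherit injectivity from $\F_q$, and an injective self-map of a finite set is a bijection). The subtlety you flag --- that $\alpha$, not just $x$, must lie in $\mathbb{S}$ --- is the right detail to check, and nothing is missing.
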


The second crucial ingredient in the classification results from~\cite{DBLP:journals/ffa/BergmanCV22,coulter2006classification,DBLP:journals/ffa/CoulterL12} (the proof in~\cite{hiramine1989conjecture} also uses similar ideas) is Hermite's criterion for permutation polynomials.

\begin{lemma}[Hermite's Criterion~{\cite[Theorem 7.4]{Lidl_Niederreiter_1996}}]
    \label{lem:Hermite} A polynomial $Q$ in $\F_q[X]$ induces a permutation on $\F_q$ under evaluation if and only if the following two conditions hold:
    \begin{enumerate}
        \item $Q$ has exactly one root in $\F_q$, and
        \item for every positive integer $t \not\equiv 0 \pmod{p}$ less than $q-1$, the reduction of $Q(X)^t$ modulo $X^q-X$ has degree less than $q-1$.
    \end{enumerate}
\end{lemma}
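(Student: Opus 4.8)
The plan is to translate both the permutation property and the two stated conditions into statements about the power sums $S_t = \sum_{c \in \F_q} Q(c)^t$ for $t = 1, \ldots, q-1$, and then compare them term by term. The starting observation is the elementary evaluation $\sum_{c \in \F_q} c^i$, which is $0$ for $0 \le i \le q-2$ and equals $-1$ for $i = q-1$. Applying this to the reduction $\tilde{Q}_t$ of $Q(X)^t$ modulo $X^q - X$ (which induces the same evaluation map on $\F_q$) shows that $S_t$ equals minus the coefficient of $X^{q-1}$ in $\tilde{Q}_t$. Hence condition~(2) is exactly the assertion that $S_t = 0$ for every $t$ with $1 \le t \le q-2$ and $p \nmid t$, and the whole proof reduces to showing that $Q$ is a permutation if and only if $S_t = \sum_c c^t$ for all $t = 1, \ldots, q-1$.

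First I would establish this counting criterion. For the nontrivial direction, write $N(b)$ for the number of preimages of $b$ under $Q$ and note $S_t = \sum_{b \in \F_q} \bar{N}(b)\, b^t$, where $\bar{N}(b)$ denotes $N(b)$ reduced modulo $p$ and viewed in $\F_q$. The hypotheses say that $g(b) := \bar{N}(b) - 1$ is orthogonal to every monomial $b \mapsto b^t$ with respect to the pairing $(g,h) \mapsto \sum_b g(b)h(b)$ (the case $t=0$ holding automatically, since $\sum_b \bar{N}(b) = \bar{q} = 0$). Because the monomials $1, X, \ldots, X^{q-1}$ induce a basis of the space of all functions $\F_q \to \F_q$ and this pairing is non-degenerate, we conclude $g \equiv 0$, that is $N(b) \equiv 1 \pmod p$ for every $b$. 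Combined with $\sum_b N(b) = q$ and $N(b) \ge 0$, this forces $N(b) = 1$ for all $b$, so $Q$ is a permutation; the converse is immediate since then $\{Q(c)\} = \F_q$ as a multiset.

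To match the reduced form of condition~(2), which only restricts $t$ with $p \nmid t$, I would invoke the Frobenius identity $S_{pt} = S_t^{\,p}$, valid because $x \mapsto x^p$ is an additive automorphism of $\F_q$; writing $t = p^v t'$ with $p \nmid t'$ and $t' \le t \le q-2$ then propagates $S_{t'} = 0$ to $S_t = 0$ for all $t$ in $1, \ldots, q-2$. The remaining case $t = q-1$ is handled separately: since $Q(c)^{q-1}$ is the indicator of $Q(c) \neq 0$, one has $S_{q-1} = -\bar{Z}$, where $Z$ is the number of roots of $Q$ in $\F_q$, so condition~(1), namely $Z = 1$, is precisely what forces $S_{q-1} = -1 = \sum_c c^{q-1}$. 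Assembling these pieces gives $S_t = \sum_c c^t$ for all $t = 1, \ldots, q-1$, and the counting criterion finishes the reverse implication; the forward implication follows by reading the same equalities off a genuine permutation. I expect the main obstacle to be the counting step itself: one must pass from the mod-$p$ data $N(b) \equiv 1 \pmod p$ (all that the $\F_q$-valued power sums can detect) to the exact equalities $N(b) = 1$, which is exactly where the global constraint $\sum_b N(b) = q$ and, for the root at $0$, the \emph{exact} hypothesis of condition~(1) rather than merely its mod-$p$ shadow become indispensable.
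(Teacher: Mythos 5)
Your proof is correct. Note that the paper itself gives no proof of this lemma: it is quoted as a known result from the cited reference (Lidl--Niederreiter, Theorem 7.4), and your argument is essentially the standard one found there --- the power-sum characterization of permutations via $\sum_{c} c^i = 0$ for $0 \le i \le q-2$ and $-1$ for $i = q-1$, the Vandermonde/non-degeneracy step giving $N(b) \equiv 1 \pmod{p}$, the Frobenius identity $S_{pt} = S_t^p$ to dispense with the exponents divisible by $p$, and the counting argument $\sum_b N(b) = q$ to upgrade the congruences to equalities. One small inaccuracy in your closing remark: the \emph{exact} form of condition (1) is not actually indispensable in the reverse direction; its mod-$p$ shadow $Z \equiv 1 \pmod{p}$ already yields $S_{q-1} = -1$, and your own counting step then forces $N(b) = 1$ for every $b$, including $b = 0$. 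This does not affect the validity of the proof, since the exact hypothesis is what is given.
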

In the literature, the non-planarity of a polynomial $F$ is often established by constructing an integer $t$ such that Condition 2 is violated for $Q(X) = F(X+\alpha)-F(X)$ with non-zero $\alpha$.
If $F$ is a monomial, then multiplying $X$ by $\alpha$ shows that one can assume $\alpha = 1$ without loss of generality.
Our proof of the classification of planar polynomials over prime fields uses the fact that the degree of a polynomial is equal to its algebraic degree, which significantly simplifies the application of the degree bound.
\begin{corollary} \label{cor:planarPrimeField}
If $F$ is a planar function on a field of prime order $p \geq 13$, then $\adeg{F} = 2$.
\end{corollary}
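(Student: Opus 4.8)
The plan is to exploit that over a prime field we are in the case $n = 1$, so $q = p$ and every function coincides with its interpolating polynomial of degree at most $p - 1$; consequently the algebraic degree equals the ordinary degree, and $\wt(i) = i$ for all $0 \le i \le p - 1$. Write $d = \adeg{F} = \deg F$. First I would dispose of the lower bound $d \ge 2$: if $F$ were constant or affine, then each map $x \mapsto F(x + \alpha) - F(x) - F(\alpha)$ would be constant, hence not a permutation, contradicting planarity. It remains to prove $d \le 2$, so I assume $d \ge 3$ and aim for a contradiction. The main tool is \Cref{thm:degreeBoundPlanar} applied to $G = X^e$, which over $\F_p$ reads $\adeg{F^e} \le e + (p-1)/2$ for every $e$ in $\{1, \dots, p - 1\}$, since $\adeg{X^e} = \wt(e) = e$.

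The crucial observation is that when $ed \le p - 1$ no reduction modulo $X^p - X$ takes place: then $F^e$ has ordinary degree $ed$, and its leading coefficient is the nonzero $e$-th power of the leading coefficient of $F$, so $\adeg{F^e} = ed$ exactly. Combined with the degree bound this forces $e(d - 1) \le (p - 1)/2$. I would therefore look for an integer $e$ lying in the half-open interval $\bigl(\tfrac{p-1}{2(d-1)}, \tfrac{p-1}{d}\bigr]$, since any such $e$ satisfies $ed \le p - 1$ while violating $e(d-1) \le (p-1)/2$. A short case analysis, using the hypothesis $p \ge 13$, shows that this interval contains an integer for every $3 \le d \le (p-1)/2$ (the borderline degrees being $d$ near $(p-1)/2$, where one takes $e = 2$, and the smallest degrees $d = 3, 4$, where the hypothesis $p \ge 13$ is needed). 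This rules out every degree up to $(p-1)/2$.

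The one degree the degree bound cannot reach is the maximal value $d = (p+1)/2$, and I expect this to be the main obstacle. Here $e = 1$ already gives equality $\adeg{F} = (p-1)/2 + 1$, while for $e \ge 2$ one has $ed > p - 1$, so reduction occurs and the exact value of $\adeg{F^e}$ is no longer pinned down by the leading term; this tightness mirrors the bent-function degree bound being attained, so the degree bound alone is powerless. Instead I would invoke Hermite's criterion (\Cref{lem:Hermite}), where planarity supplies genuinely new information. Since $d = (p+1)/2 < p$, for any nonzero $\alpha$ the polynomial $Q = F(X + \alpha) - F(X) - F(\alpha)$ has degree exactly $d - 1 = (p-1)/2$, its leading coefficient being $d\,a_d\,\alpha \ne 0$. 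Then $Q^2$ has degree $p - 1$ with nonzero leading coefficient, and since $X^{p-1}$ is already reduced modulo $X^p - X$, the reduction of $Q^2$ has degree $p - 1$. By Condition 2 of \Cref{lem:Hermite} with $t = 2$, the polynomial $Q$ is not a permutation, contradicting the planarity of $F$. Hence $d \ne (p+1)/2$.

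Combining the two arguments yields $d = 2$, as claimed. The hard part is precisely the maximal-degree case: there the degree bound is exactly tight and contributes nothing, and the resolution rests on the observation that a planar $F$ of degree $(p+1)/2$ would have derivatives that are permutations of degree $(p-1)/2$, which Hermite's criterion forbids through the exponent $t = 2$. The only other point requiring care is the elementary verification that the interval above contains an integer for all intermediate degrees once $p \ge 13$.
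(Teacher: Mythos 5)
Your proof is correct and takes essentially the same approach as the paper: the degree bound applied to $G = X^e$ with $e = \lfloor (p-1)/d \rfloor$ (i.e., an integer in your interval) rules out $3 \le d \le (p-1)/2$, and the remaining case $d = (p+1)/2$ is eliminated via Hermite's criterion applied to the difference polynomial of degree $(p-1)/2$. Your explicit $t = 2$ argument is just the unpacked proof, in this special case, of the corollary the paper cites (no permutation polynomial of $\F_p$ has degree a nontrivial divisor of $p-1$).
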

\begin{proof}
Without loss of generality, we can assume that $F(x) = \sum_{i=0}^{d}a_i\,x^i$ with $a_0, \ldots, a_d$ in $\F_p$, $a_d = 1$ and $0 \leq d \leq p-1$. As affine functions are not planar, we have $d \geq 2$. 
\Cref{thm:degreeBoundPlanar} with $G$ the identity function implies that $d \le (p + 1) / 2$.
For $2 < d < (p+1)/2$, we will choose an exponent $e$ such that the degree bound in \Cref{thm:degreeBoundPlanar} is violated for $G : x \mapsto x^e$. 
That is, we construct an $e$ such that
\begin{equation*}
\adeg{G \circ F} - \adeg{G} > \frac{p - 1}{2}.
\end{equation*}
Let us take $e = \lfloor (p - 1)/d \rfloor$, so that $\adeg{G \circ F} = \adeg{F^e} = ed$ and $\adeg{G} = e$. This construction works for all $d$ for which $e(d - 1) > (p - 1) / 2$, that is,
\begin{equation*}
\left\lfloor\frac{p - 1}{d}\right\rfloor (d - 1) > \frac{p - 1}{2}.
\end{equation*}
This is true provided that $3 \le d \le (p - 1) / 2$. Indeed, the case of $d \in \{ (p-1)/2 -1, (p-1)/2\}$ is straightforward. For the other cases, using the inequality $\lfloor y \rfloor > y-1$, we obtain
\begin{equation*}
\left\lfloor\frac{p - 1}{d}\right\rfloor (d - 1) > p-1 - \left(\frac{p-1}{d} + d -1 \right)
\end{equation*}
with $(p-1)/d + d -1 \leq (p-1)/2$, provided that $3 \leq d \leq (p-1)/2-2$ since $p \ge 13$.

Hence, the only remaining cases are $d = 2$ and $d = (p + 1) / 2$. The latter case can be ruled out because the unique interpolating polynomial with degree at most $p - 1$ of $x \mapsto F(x+1) - F(x) - F(1)$ would have degree $(p-1)/2$. However, $(p-1)/2$ is a non-trivial divisor of $p-1$, so Hermite's criterion implies that this polynomial does not induce a permutation on $\F_p$ under evaluation (see, e.g.,~\cite[Corollary 7.5]{Lidl_Niederreiter_1996}). 
\end{proof}

Note that the cases $p \leq 11$ can be handled computationally. Indeed, one only needs to check the planarity of polynomials of degree at most $(p-1)/2$ and without linear or constant term. There are at most $11^4$ such polynomials.

The classification of planar monomials over fields of order $p^2$ is a consequence of the classification for prime-order fields, and the degree bound with $e = 1$.
The proof uses the fact that if a monomial function is planar on $\F_{p^2}$, then it must also be planar on $\F_p$ (see Lemma~\ref{lem:restriction}).

\begin{corollary}
\label{cor:primeSquared}
If $F$ is a planar monomial function on a field of order $p^2$, then $\adeg{F} = 2$.
\end{corollary}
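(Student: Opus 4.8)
The plan is to combine the prime-field classification from \Cref{cor:planarPrimeField} with the degree bound of \Cref{cor:degreeBoundPlanarMonomial} at $e = 1$. Write $F(x) = x^d$ with $0 \le d \le q - 1$ and $q = p^2$; as constant and affine maps are never planar, $d \ge 2$. First I would restrict $F$ to the prime subfield. The coefficient of $X^d$ lies in $\F_p$, so \Cref{lem:restriction} makes the restriction of $F$ to $\F_p$ planar on $\F_p$. Reducing $X^d$ modulo $X^p - X$ shows this restriction is the monomial $x \mapsto x^{d^*}$, where $d^*$ is the representative in $\{1, \dots, p - 1\}$ with $d^* \equiv d \pmod{p - 1}$. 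Its algebraic degree over $\F_p$ is $\wt(d^*) = d^*$, since $d^*$ is a single base-$p$ digit. For $p \ge 13$, \Cref{cor:planarPrimeField} forces any planar function on $\F_p$ to have algebraic degree $2$, so $d^* = 2$ and hence $d \equiv 2 \pmod{p - 1}$; the finitely many primes $p \le 11$ are handled by the same direct computation used after \Cref{cor:planarPrimeField}.

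Next I would translate this congruence into a statement about the base-$p$ digit sum $\wt(d)$. Because $p \equiv 1 \pmod{p - 1}$, every integer is congruent modulo $p - 1$ to its digit sum, so $\wt(d) \equiv d \equiv 2 \pmod{p - 1}$. Writing $d = d_1 p + d_0$ with $0 \le d_0, d_1 \le p - 1$ gives $\wt(d) = d_0 + d_1 \in \{0, 1, \dots, 2p - 2\}$, and the values in this range that are $\equiv 2 \pmod{p - 1}$ are $2$, $p + 1$, and $2p$. Since $2p > 2p - 2$, only $\wt(d) \in \{2, p + 1\}$ remains.

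Finally I would eliminate $\wt(d) = p + 1$ with the degree bound. Applying \Cref{cor:degreeBoundPlanarMonomial} at $e = 1$ (equivalently \Cref{thm:degreeBoundPlanar} with $G$ the identity, so that $\adeg{G} = \wt(1) = 1$), and using $1 \star d = d$, gives $\wt(d) - 1 \le n(p - 1)/2 = p - 1$, that is $\wt(d) \le p$. As $p + 1 > p$, this leaves $\wt(d) = 2$, i.e. $\adeg{F} = 2$. The argument is short once the earlier results are available, so there is no serious obstacle; the only point requiring care is the boundary bookkeeping --- confirming that the prime-field step (and the small-prime computation) really pins down $d^* = 2$, and that the single congruence $\wt(d) \equiv 2 \pmod{p - 1}$ combined with the lone inequality $\wt(d) \le p$ admits no value other than $2$. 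Note that this does not identify the planar exponents themselves (both $d = p + 1$ and $d = 2p^i$ have digit sum $2$); it establishes only that the algebraic degree equals $2$, which is exactly what the corollary asserts.
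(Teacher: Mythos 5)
Your proposal is correct and follows essentially the same route as the paper: restrict the monomial to the prime subfield via \Cref{lem:restriction}, invoke \Cref{cor:planarPrimeField} to obtain $\wt(d) \equiv 2 \pmod{p-1}$, and then apply \Cref{cor:degreeBoundPlanarMonomial} with $e = 1$ to get $\wt(d) \le p$, forcing $\wt(d) = 2$. Your write-up is in fact slightly more careful than the paper's (explicitly identifying the restricted exponent $d^*$ and noting the computational treatment of $p \le 11$), but these are refinements of the same argument, not a different one.
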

\begin{proof}
By restricting $F : x \mapsto x^d$ to $\F_p$ and applying \Cref{cor:planarPrimeField}, we get $\wt(d) \equiv 2 \pmod{p - 1}$.
That is, $\wt(d) = 2 + k(p - 1)$ for some $k \ge 0$. However, the degree bound from \Cref{cor:degreeBoundPlanarMonomial} with $e = 1$ implies that $\wt(d) \le p$.
Hence, $k = 0$ and $\wt(d) = 2$.
\end{proof}

Corollary~\ref{cor:primeSquared} states that if $X^d$ is planar over $\F_{p^2}$, we have $d \equiv p^i + p^j \pmod {p^2-1}$ with $0 \leq i \leq j \leq 1$. From the classification of planar Dembowski-Ostrom monomials~\cite{DBLP:journals/dcc/CoulterM97}, it then follows that $X^d$ is planar over $\F_{p^2}$ if and only if $d \equiv 2p^i \pmod {p^2-1}$ with $i$ in $\{0,1\}$.

\begin{remark}
    The proof of \Cref{cor:primeSquared} only depends on the classification of planar monomials over prime fields and the degree bound for $e=1$. The latter result was already known from the bound of Hou in \cite[Proposition 4.4]{DBLP:journals/ffa/Hou04}. Note that $F$ is planar on $\F_q$ if and only if every component function $x \mapsto \tr(vF(x))$ with $v \in \F_q^\times$ is bent.
\end{remark}

\begin{remark}
    The proof of the recent classification of planar monomials over fields of order $p^3$ can be significantly simplified as well. Indeed, suppose $X^d$ is a planar monomial over $\F_q$ with $q = p^3$ and $1 \leq d \leq q-1$. \Cref{cor:planarPrimeField} and the degree bound with $e=1$ imply that $s_p(d) = 2 + k(p-1)$ for $k \in \{0,1\}$. In~\cite{DBLP:journals/ffa/BergmanCV22}, settling the case of $k=1$ boils down to handling 11 different choices of $d$ (listed on page 22) and proving the non-planarity of $X^d$. Using the degree bound to establish non-planarity is often much simpler than applying Hermite's criterion, as was done in~\cite{DBLP:journals/ffa/BergmanCV22}. For instance, the most complicated exponent to handle was $d = 1 + 2p + (p-2)p^2$. Applying the degree bound with $e = (p-3)/2 + 2p + 2p^2$ (assuming $p> 11$) directly establishes non-planarity, whereas the method in~\cite{DBLP:journals/ffa/BergmanCV22} based on Hermite's criterion needed a long case-analysis.
\end{remark}

\section{Classifying Planar Monomials over \texorpdfstring{$\F_{p^{2^k}}$ for $p>5$}{Finite Fields of Degree 2ᵏ}}
Suppose that $n = 2m$ for an integer $m \geq 2$.
As any planar monomial over $\F_{p^n}$ must also be planar over the subfield of order $p^m$, the core of our argument is to show that any planar monomial $X^d$ over $\F_{p^n}$ fulfilling $d \equiv 2p^j \pmod {p^m-1}$ for a non-negative integer $j$ necessarily fulfills the congruence $d \equiv 2p^i \pmod {p^n-1}$ for some non-negative integer $i$. The classification result for $n$ a power of two then follows from an inductive argument, as already remarked in~\cite{DBLP:journals/ffa/CoulterL12}.

For integers $d_0,\dots,d_{n-1}$,
we denote by $[d_0,d_1,\dots,d_{n-1}]_p$ the integer $d = \sum_{i=0}^{n-1} d_i \cdot p^i$. If each $d_i$ for $0 \leq i \leq n-1$ fulfills $0 \leq d_i < p$ and not all $d_i$ are equal to $p-1$, we write $d = (d_0,d_1,\dots,d_{n-1})_p$. In that case, $(d_0,d_1,\dots,d_{n-1})_p$ is the base-$p$ representation of $d$ and we have $\wt(d) = \sum_{i=0}^{n-1} d_i$. For an integer $x$, we will write $\comp{x} = p - 1 - x$. If $x$ is a digit in $\{0, 1, \ldots, p - 1\}$, then $\comp{x}$ is its complement.

The following lemma on the digits of $d$ with $d \equiv 2p^{m-1} \pmod {p^m-1}$ was established by Coulter and Lazebnik using elementary arithmetic.

\begin{lemma}[\cite{DBLP:journals/ffa/CoulterL12}]
\label{lem:coulter_lazebnik}
    Let $n = 2m$ and $d = (d_0,d_1,\dots,d_{n-1})_p$ such that $d$ is congruent to $2p^{m-1} \pmod {p^m-1}$. The tuple $(d_0 + d_m, d_1+d_{m+1},\dots,d_{m-1}+d_{n-1})$ is equal to one of the following:\begin{enumerate}
        \item $(0,\dots,0,0,2)$
        \item $(p-1,\dots,p-1,p-1,p+1)$
        \item $(0,\dots,0,0,p,p-1,\dots,p-1,p-1,1)$.
    \end{enumerate}
\end{lemma}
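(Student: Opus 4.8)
The plan is to convert the congruence on $d$ into a constraint on the digit sums $c_i := d_i + d_{m+i}$ and then classify all admissible tuples $(c_0,\dots,c_{m-1})$. First I would reduce modulo $p^m-1$. Writing $d = \sum_{i=0}^{m-1} d_i p^i + \sum_{i=0}^{m-1} d_{m+i}\,p^{m+i}$ and using $p^{m+i} \equiv p^i \pmod{p^m-1}$ yields
\[
d \equiv \sum_{i=0}^{m-1} c_i\,p^i \pmod{p^m-1}, \qquad c_i = d_i + d_{m+i}.
\]
Set $C = \sum_{i=0}^{m-1} c_i p^i$. Since $d_i, d_{m+i} \in \{0,\dots,p-1\}$ we have $0 \le c_i \le 2(p-1)$, and hence $0 \le C \le 2(p-1)\frac{p^m-1}{p-1} = 2(p^m-1)$. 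The hypothesis gives $C \equiv 2p^{m-1} \pmod{p^m-1}$, and because $2p^{m-1} < p^m-1$ for $m \ge 2$, the only two values in $[0,2(p^m-1)]$ with this residue are
\[
C_0 = 2p^{m-1} \quad\text{and}\quad C_1 = 2p^{m-1} + (p^m-1).
\]

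The core of the argument is then to list, for each of $C_0$ and $C_1$, every representation $C = \sum_{i=0}^{m-1} c_i p^i$ with digits in the relaxed range $[0,2(p-1)]$. The difficulty is that this range exceeds $p-1$, so such representations are no longer unique and carries must be controlled. I would run a top-down induction: the digits below a leading position $k$ contribute at most $2(p^{k}-1) < 2p^{k}$, which forces the leading digit to one of at most two consecutive integers, and each admissible choice either terminates the expansion (forcing all lower digits to $0$) or reduces to the identical problem one position down. For $C_1$ this pins $c_{m-1}=p+1$ and then $c_{m-2}=\dots=c_0=p-1$ uniquely, giving the second listed tuple. For $C_0$ it forces $c_{m-1}\in\{1,2\}$: the value $2$ yields the first listed tuple $(0,\dots,0,2)$, while the value $1$ reduces to representing $p^{m-1}$, whose solutions form exactly the one-parameter family of the third listed tuple — a block of $0$'s, a single digit $p$, a block of $(p-1)$'s, and a final $1$ — indexed by the position of the digit $p$.

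Combining these, the tuple $(c_0,\dots,c_{m-1})$ arising from any $d$ satisfying the hypothesis must be one of the three listed forms; the degenerate case $d=p^n-1$ (all base-$p$ digits equal to $p-1$) gives $C \equiv 0$ and so does not satisfy the congruence, meaning the normalization built into the $(\cdot)_p$ notation costs nothing. I expect the main obstacle to be precisely this enumeration step: because the digit bound $2(p-1)$ breaks the uniqueness of $p$-ary expansions, the argument must rule out spurious representations, and the cleanest way to do so seems to be the top-down estimate above, whose crucial feature is that the lower digits total at most $2p^{k}-2$, just short of $2p^{k}$, thereby squeezing each leading digit from above and below.
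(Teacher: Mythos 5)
Your proof is correct. There is nothing internal to compare it against: the paper does not prove this lemma but quotes it from Coulter--Lazebnik \cite{DBLP:journals/ffa/CoulterL12}, describing the original argument only as ``elementary arithmetic,'' so what you have written is a complete, self-contained proof of the cited result. The two essential points are both handled soundly: (i) since $0 \le C = \sum_{i=0}^{m-1} c_i p^i \le 2(p^m-1)$ and $0 < 2p^{m-1} < p^m - 1$ (using $m \ge 2$, which is the setting in which the paper invokes the lemma), the congruence pins $C$ to exactly the two values $2p^{m-1}$ and $2p^{m-1} + p^m - 1$; and (ii) the non-uniqueness caused by the relaxed digit range $\{0,\dots,2(p-1)\}$ is controlled by your leading-digit squeeze --- positions below $k$ contribute at most $2(p^k-1)$, so each leading coefficient is confined to an interval of length less than $2$ --- which forces $c_{m-1} \in \{1,2\}$ when $C = 2p^{m-1}$ and $c_{m-1} = p+1$ when $C = 2p^{m-1}+p^m-1$. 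From there the recursion for representing $p^{m-1}$ (leading digit $p-1$ or $p$) produces precisely the one-parameter family of case 3, and the recursion for $p^{m-1}-1$ forces all remaining digits to equal $p-1$, giving case 2. Compared with a digit-by-digit analysis of the addition of the two halves of $d$ with explicit carry tracking, your formulation absorbs all carry bookkeeping into the single inequality $c_i \le 2(p-1)$, which is arguably the cleanest way to rule out spurious representations; the only hypothesis worth stating explicitly at the outset is $m \ge 2$, since for $m=1$ and $p=3$ the strict inequality $2p^{m-1} < p^m-1$ you rely on degenerates to equality.
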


To show that $X^{d}$ with $d \equiv 2p^{m-1} \pmod {p^m-1}$ is not planar over $\F_{p^n}$, we only need to consider exponents $d$ so that the tuple $(d_0+d_m,d_1+d_{m+1},\dots,d_{m-1}+d_{n-1})$ falls into one of the above cases.

The only planar monomials belonging to case 1 are those equivalent to $X^2$, i.e., either $d_{m-1} = 2$ or $d_{n-1} = 2$, corresponding to the monomials $X^{2p^{m-1}}$ and $X^{2p^{n-1}}$ respectively. When $d_{m-1} = d_{n-1} = 1$, the monomial $X^d$ is not planar, because $n/\gcd(n-1-(m-1),n)=2$. Case 2 was also solved completely in~\cite{DBLP:journals/ffa/CoulterL12}, but the proof was quite technical. Using the degree bound from \Cref{cor:degreeBoundPlanarMonomial} with $e = 1$, it is immediate that case 2 does not contain any planar monomials. Indeed, we have $\wt(d) = m(p-1) + 2$, whence $\wt(e \star d) - \wt(e) = m(p-1)+1 > n(p-1)/2$. Case 3 is the most complicated and was only solved for $n \leq 4$.  Based on the degree bound, we show that case 3 does not contain any planar monomial functions for $p>5$, establishing the classification of planar monomials over $\F_{p^{2^k}}$ for $p> 5$. 

In the following, we assume $n = 2m$. Let $r$ and $s$ be non-negative integers such that $r+s = m-2$. For $t, u_1,\dots,u_s$ in $\{0,\dots,p-1\}$ with $t\neq 0$, we define $D(t,u_1,\dots,u_s)$ to be the integer 
\begin{align}
\label{eq:general_form}
(\overbrace{0,\dots,0}^{r}, t, \overbrace{u_1,\dots,u_s}^{s},1,\overbrace{0,\dots,0}^{r},p-t, \overbrace{\comp{u}_1,\dots,\comp{u}_s}^s,0)_p,
\end{align} 
where $\comp{u}_i = p - 1 - u_i$.
Any $d$ corresponding to case 3 is of this form or a cyclic shift of it, because we can assume without loss of generality that the rightmost digit is a zero (in case it is a one, we can apply a cyclic shift). To prove our second main result (Theorem~\ref{thm:main_power2}), we will prove the following lemma. It implies that the only planar monomials that fall into cases 1, 2 or 3 are of the form $X^{2p^i}$. 

\begin{lemma}
\label{lem:power_2}
Suppose that $p>5$, $n$ is even, and let $s$ be a non-negative integer. For all $t$, $u_1,\dots,u_s$ in $\{0,\dots,p-1\}$ with $t \neq 0$, the monomial $X^{D(t,u_1,\dots,u_s)}$ is not planar over $\F_{p^n}$.
\end{lemma}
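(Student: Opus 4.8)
The plan is to apply the monomial degree bound of \Cref{cor:degreeBoundPlanarMonomial}: to prove that $X^{D(t,u_1,\dots,u_s)}$ is not planar it suffices to exhibit a single exponent $e$ with $1 \le e \le q-1$ that violates it, i.e.\ with $\wt(e \star d) - \wt(e) > n(p-1)/2 = m(p-1)$, where $d = D(t,u_1,\dots,u_s)$ and $q = p^{2m}$. Since $\wt(e\star d) \le n(p-1) = 2m(p-1)$ always holds, the idea is to choose $e$ so that $e \star d$ has digit sum close to this maximum---so that $ed \bmod (q-1)$ is a word consisting almost entirely of the digit $p-1$---while keeping $\wt(e)$ of the order of $(m-1)(p-1)$. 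The resulting inequality will then read, in the tightest configuration, $(m+1)(p-1) - O(1) > m(p-1)$, and it is exactly at this point that the hypothesis $p>5$ must enter.

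First I would record the arithmetic structure of $d$. Writing $d = B + p^m T$ with $B,T$ the integers formed by the low and high halves of its base-$p$ expansion, the congruence $d \equiv 2p^{m-1} \pmod{p^m-1}$ forces $B + T = 2p^{m-1}$, while reducing modulo the complementary factor gives $d \equiv 2p^r(t + pu_1 + \dots + p^s u_s) \pmod{p^m+1}$. This factorisation $q-1 = (p^m-1)(p^m+1)$ is what I would exploit, via the Chinese remainder theorem, both to control $ed \bmod(q-1)$ and to design $e$. Concretely I would construct $e$ (by prescribing its base-$p$ digits, equivalently by prescribing $ed \bmod(q-1)$) so that multiplying by $d$ triggers a long carry/borrow cascade: the complementary relation $\comp{u}_i = p-1-u_i$ between the two halves of $d$, together with $B+T = 2p^{m-1}$, is precisely what makes such a cascade collapse into a block of $(p-1)$'s. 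With $e$ fixed, the bulk of the work is bookkeeping: computing the base-$p$ digits of $e\star d$ to obtain $\wt(e\star d)$, and computing $\wt(e)$, both uniformly in the parameters $t,u_1,\dots,u_s$.

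The last step is to assemble the inequality. In the generic situation I expect $\wt(e\star d) = n(p-1) - 2$ (with $e\star d = q-1-(p+1)$) and $\wt(e) = (m-1)(p-1)+2$, so that $\wt(e\star d) - \wt(e) = (m+1)(p-1) - 4$. Comparing with the bound $m(p-1)$, the margin is $(p-1) - 4 = p-5$, which is positive precisely when $p>5$; the extremal case $r=0$ (equivalently $s=m-2$) is what pins the constant at $4$ and thereby explains why the three small primes are excluded.

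The hard part is twofold. The first difficulty is producing the multiplier uniformly: the cascade must be engineered to work for every admissible $(t,u_1,\dots,u_s)$ at once, and the carry analysis needs care at the boundaries of each digit block (around the digits $t$, $1$, $p-t$, and the complemented block $\comp{u}_1,\dots,\comp{u}_s$). The second, subtler difficulty is that $e\star d$ can only take values that are multiples of $g = \gcd(d,q-1)$; for most parameters $g=2$ and the clean target $q-1-(p+1)$ is attainable, but degenerate choices (for instance $t=1$ together with some $u_i=p-1$, where $p^2-p+1$ divides both $d$ and $q-1$) yield $g>2$ and make that value unreachable. In those cases one must instead aim for the best attainable high-digit-sum multiple of $g$, and I would expect this to force a separate sub-case; verifying that a violating $e$ still exists there (the margin staying positive for $p>5$) is the step I would treat most carefully.
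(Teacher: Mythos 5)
Your overall strategy---exhibiting a single exponent $e$ that violates \Cref{cor:degreeBoundPlanarMonomial}---is the same as the paper's, but your execution has a genuine gap at its center: you prescribe the \emph{value} of $e \star d$ (namely $q-1-(p+1)$, chosen for its near-maximal digit sum $n(p-1)-2$) and then simply assert that a corresponding multiplier satisfies $\wt(e) = (m-1)(p-1)+2$. Nothing in the proposal controls $\wt(e)$. The congruence $ed \equiv -(p+1) \pmod{q-1}$ determines $e$ only up to the $g = \gcd(d,q-1)$ solutions $e_0 + k(q-1)/g$, and a priori all of them could have weight at or above $m(p-1)-2$; note that $\wt(e) < m(p-1)-2$ is exactly what you need once $\wt(e\star d) = n(p-1)-2$ is fixed. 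Since the entire difficulty of the lemma is precisely this weight control---uniformly in $t, u_1, \dots, u_s$---the proposal defers the whole proof: you yourself label the uniform construction of $e$, the carry analysis, and the $\gcd$-obstruction sub-case as ``the hard part'' without resolving any of them. Relatedly, your explanation of the hypothesis $p>5$ (a clean margin of $p-5$ arising from the constants $2$ and $4$) is numerology built on the unproven weight claim, not an argument; small examples are consistent with it, but no mechanism is given.

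It is worth contrasting the direction of reasoning with the paper's, because the inversion is what makes the paper's proof work. The paper constructs $e$ explicitly---$e = hp^m + h + 1$ in \Cref{lem:power_2_finite} and $e = (p-1+h)p^m + h + 1$ in \Cref{lem:remaining_p7}, with $h$ chosen case by case---so that $\wt(e)$ is known exactly by construction (roughly $(r+1)(p-1)+1$), and then computes the base-$p$ digits of $ed \bmod (q-1)$ by a carry analysis: routine in the easy cases ($t \notin \{1,p-1\}$ or some $u_j \notin \{0,1,p-2,p-1\}$, which even works for $p>3$), and via the technical \Cref{lem:technical} in the remaining cases. The resulting values of $\wt(e \star d)$ are around $m(p-1)+(r+1)(p-1)+2$, nowhere near the maximum you aim for, and the margins over $m(p-1)+\wt(e)$ are as small as $1$; the condition $p>5$ enters not through a global margin but through digit-range constraints in specific subcases (for instance, guaranteeing a digit $\gamma_j \notin \{0,1,p-2,p-1\}$ when $u_s=1$ and $\delta=1$, which fails at $p=5$). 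If you want to salvage your route, the concrete task is: for every admissible $(t,u_1,\dots,u_s)$ with $g \mid p+1$, prove that at least one solution of $ed \equiv -(p+1) \pmod{q-1}$ has $\wt(e) < m(p-1)-2$, and supply a replacement target when $g \nmid p+1$. As it stands, that claim is the theorem, not a step.
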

The main result then follows by an inductive argument. We will prove Lemma~\ref{lem:power_2} by showing that, for each $d = D(t,u_1,\dots,u_s)$, there exists an $e$ in $\{1,\dots,p^n-2\}$ such that $ed \equiv (r_0,\dots,r_{n-1})_p \not\equiv 0\pmod{p^n-1}$ (so that $e \star d = (r_0,\dots,r_{n-1})_p$) and $\wt(e \star d) - \wt(e) \geq m(p-1) + 1$. As there is no universal choice for $e$,
the proof is quite technical and split into several cases, depending on the values of the digits $t$ and $u_1,\dots,u_s$ of $d$. To find suitable candidates of $e$, we first conducted computer experiments. However, our proof is completely verifiable by hand. We start by ruling out all but a finite number of cases (for fixed dimension $n$), independently of $p$.

\subsection{Proof of Lemma~\ref{lem:power_2} up to a Finite Number of Cases for Fixed \texorpdfstring{$n$}{n}}
We start with the simplest case, where not all of $t,u_1,\dots,u_s$ are in $\{0,1,p-2,p-1\}$. Note that we do not exclude the case of $p=5$ here.

\begin{lemma} \label{lem:power_2_finite}
Suppose $p>3$, $n = 2m$ with $m \geq 2$ and let $s$ be a non-negative integer. If $d = D(t,u_1,\dots,u_s)$ with $t, u_1,\dots,u_s$ in $\{0,\dots,p-1\}$ and $t\neq 0$ such that $t \notin\{1,p-1\}$ or $u_j \notin \{0,1,p-2,p-1\}$ for some $j$ in $\{1,\dots,s\}$, then there exists an  $e$ in $\{1,\dots,p^n-1\}$ such that $\wt(e \star d) - \wt(e) > m(p-1)$.
\end{lemma}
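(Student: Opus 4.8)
The plan is to apply \Cref{cor:degreeBoundPlanarMonomial}: to show $X^d$ is not planar it suffices to produce an $e$ in $\{1,\dots,p^n-1\}$ with $\wt(e\star d)-\wt(e) > m(p-1) = n(p-1)/2$. Two features of $d = D(t,u_1,\dots,u_s)$ drive the construction. First, because the digits pair up complementarily across the two halves ($t$ with $p-t$, each $u_i$ with $\comp{u}_i = p-1-u_i$, and $1$ with $0$), the weight is $\wt(d) = p+1+s(p-1)$. Second, this half-complementary structure is what I will exploit after reducing a suitable multiple of $d$ modulo $p^n-1$.

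The tool I would use is a family of \emph{repunit multipliers}: for a stride $a$ (in the main cases $a=1$ or $a=2$), put $e = \sum_{j=0}^{m-1} p^{aj}$, so that $\wt(e)=m$ and $e\star d$ is the sum of the cyclic shifts of $d$ by $0,a,2a,\dots,(m-1)a$ positions. Hence the coordinate of $e\cdot d$ at position $\ell$ \emph{before carrying} is a cyclic ``strided window sum'' $w_\ell$ of the base-$p$ digits of $d$: for $a=1$ it is the sum of $m$ consecutive digits (so that $w_\ell+w_{\ell+m}=\wt(d)$), and for $a=2$ it is the total weight $S_{\mathrm{even}}$ or $S_{\mathrm{odd}}$ carried on one parity class. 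In both cases each digit is counted $m$ times, so $\sum_\ell w_\ell = m\,\wt(d)$. Since resolving a single unit of carry lowers the digit sum by exactly $p-1$, one gets $\wt(e\star d) = m\,\wt(d) - (p-1)K$, where $K$ is the total carry incurred when normalizing modulo $p^n-1$. As $\wt(d)-p-1 = s(p-1)$, the target reduces to the clean inequality $K \le ms$, which is equivalent to $\wt(e\star d)\ge m(p+1)$ and hence gives $\wt(e\star d)-\wt(e)\ge mp > m(p-1)$, with slack exactly $m$.

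It remains to choose the stride $a$ (and a cyclic offset of $d$) so that $K\le ms$, and here the genericity hypothesis enters. The heavy windows, which collect the large digits $p-t$ and the $\comp{u}_i$, spend a fixed budget of $ms$ carries; the delicate point is that the remaining ``light'' windows must stay strictly below $p-1$, so that the wrap-around carry they receive is absorbed without spawning a further carry. When $t\notin\{1,p-1\}$ or some $u_j\notin\{0,1,p-2,p-1\}$, the stride and offset can be chosen so that one such light window is kept small, giving $K=ms$ and $\wt(e\star d)=m(p+1)$; when every digit is extreme the analogous window sum is forced up to $p-1$, the incoming carry overflows it, and $K$ exceeds $ms$ — consistent with those configurations requiring separate treatment.

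I expect the main obstacle to be exactly this carry bookkeeping carried out uniformly in $p$. There is genuinely no single $e$ that works for all configurations: the plain repunit ($a=1$) is what one needs when the $m$-windows split the weight of $d$ evenly (for instance $s=0$ with $t\in\{2,\dots,p-2\}$, where the pre-carry digits are $t,\,t+1,\,\dots,\,p+1-t,\,p-t$ and incur no carries), whereas the stride-two repunit ($a=2$) is needed when the large digits cluster by parity (so that $S_{\mathrm{even}},S_{\mathrm{odd}}$ stay near $p$). Accordingly, the proof must be organized into cases according to which of $t,u_1,\dots,u_s$ is generic and into which subrange of $\{2,\dots,p-2\}$ it falls, selecting $a$ and the offset in each case and verifying $K\le ms$ by tracking the cyclic cascade. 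Once a valid $e$ is exhibited in every case, \Cref{cor:degreeBoundPlanarMonomial} rules out planarity and the lemma follows.
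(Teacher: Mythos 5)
Your reduction to the carry count $K$ is set up correctly, and your stride-$1$ repunit genuinely handles the case $s=0$, $t\in\{2,\dots,p-2\}$: there the pre-carry window sums are $t$, $t+1$, $p-t+1$, $p-t$, no carries occur, and $\wt(e\star d)-\wt(e)=mp$. But the approach collapses for $s\ge 1$, and this is a real gap, not deferred bookkeeping. With $\wt(e)=m$ you have almost no headroom: you need $\wt(e\star d)\ge m(p+1)$, i.e.\ $K\le ms$. For stride $1$, antipodal windows satisfy $w_\ell+w_{\ell+m}=\wt(d)=p+1+s(p-1)\ge 2p$, so at least $m$ positions overflow, and since consecutive windows differ only by single digits, the overflowing windows sit next to near-full ones and the carries cascade around the cycle. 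Concretely, take $p=7$, $n=6$ ($m=3$), $d=D(3,3)=(3,3,1,4,3,0)_7=8648$, which satisfies your hypothesis via $t=3\notin\{1,6\}$ and $u_1=3\notin\{0,1,5,6\}$. The stride-$1$ repunit $e=1+7+7^2=57$ gives $e\star d=(0,0,1,2,2,1)_7$ of weight $6$, and the stride-$2$ repunit $e=1+7^2+7^4=2451$ gives $e\star d=(1,1,1,1,1,1)_7$ of weight $6$; both yield $\wt(e\star d)-\wt(e)=3$, far below $m(p-1)=18$. Cyclic offsets cannot help, since $\wt(p^k x \bmod (p^n-1))=\wt(x)$, and the remaining strides fare no better (stride $3$ gives $(3,3,3,4,3,2)_7$, difference $15$). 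So no multiplier in your proposed family witnesses the inequality for this $d$, even though it lies squarely in the ``generic'' case your argument is supposed to cover.

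The paper escapes this trap by allowing the multiplier itself to have large weight, which works because the quantity to be bounded is the difference $\wt(e\star d)-\wt(e)$, not $\wt(e\star d)$ alone. It takes $e=hp^m+h+1$ and uses the half-complementary structure of $d$ to show (equation~\eqref{eq:de}) that multiplication by such $e$ leaves the digit pattern of $d$ intact and merely replaces the digits $1$ and $0$ at positions $m-1$ and $n-1$ by $2h+1$ and $2h$. A tailored choice of $h$ (namely $h=p^{r+1}-1$ when $t\notin\{1,p-1\}$, and $h=p^r-1+((p-1)/2-1)p^r+p^{r+j+1}$ when $u_j\notin\{0,1,p-2,p-1\}$) makes the resulting carries turn the generic digit into $t+1$, respectively into $u_j+2$ and $\comp{u}_j+2$ --- still valid digits precisely because of the genericity hypothesis --- while filling the remaining positions with runs of $p-1$. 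This produces $\wt(e\star d)=m(p-1)+(r+1)(p-1)+2$ against $\wt(e)=(r+1)(p-1)+1$, so the bound holds with slack. In the example above this gives $e=2065$, $e\star d=(4,3,6,5,3,5)_7$ of weight $26$, and $26-7=19>18$. To salvage your plan you would need to abandon weight-$m$ multipliers and incorporate some analogue of this local digit-repair mechanism.
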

\begin{proof}
Let $r = m-2 - s$.
We will multiply $d$ by $e = hp^m + h + 1$ for various choices of $h$ with $1 \leq h < p^m$. First, note that multiplying $d$ by $hp^m$ yields
\[[\overbrace{0,\dots,0}^{r},h(p-t),\overbrace{h\comp{u}_1,\dots,h\comp{u}_s}^{s},0,\overbrace{0,\dots,0}^{r},ht,\overbrace{hu_1,\dots,hu_s}^{s},h]_p \pmod {p^n-1}.\]
The product of $d$ and $h+1$ is congruent to
\begin{align*}
[&\overbrace{0,\dots,0}^{r},ht+t,\;\;\;\;\;\;\;\;\;\;\;\;\,\overbrace{hu_1+u_1,\dots,hu_s+u_s}^{s},h+1, \\
&\overbrace{0,\dots,0}^r,(h+1)(p-t),\overbrace{h\comp{u}_1 + \comp{u}_1,\dots,h\comp{u}_s + \comp{u}_s}^{s},0]_p.
\end{align*}
By adding up both results, we find that $ed$ with $e = h p^m + h + 1$ is congruent to  (mod $p^n - 1$)
\begin{align}
\label{eq:de}
    [\overbrace{0,\dots,0}^r,t,\overbrace{u_1,\dots,u_s}^s,2h+1,\overbrace{0,\dots,0}^r,p-t,\overbrace{\comp{u}_1,\dots,\comp{u}_s}^s,2h]_p.
\end{align}

We first consider the case that $t \notin \{1,p-1\}$.
Let $h = p^{r + 1} - 1$. As $1 \leq h+1 <p^m$, we have $\wt(e) = \wt(h+1) + \wt(h)$ with $\wt(h+1) = 1$ and $\wt(h) = (r+1)(p-1)$, so $\wt(e) = (r+1)(p-1) + 1$.
Moreover, $2h + 1 = (p^{r + 1} - 1) + p^{r + 1} = (p - 1, \ldots, p - 1, 1, 0, \ldots, 0)_p$ with $r + 2$ non-zero digits.
Hence, $ed$ is congruent to
\[[\overbrace{p-1,\dots,p-1}^r,t+1,\overbrace{u_1,\dots,u_s}^s,\overbrace{p-1,\dots,p-1}^{r+1},p-t+1,\overbrace{\comp{u}_1,\dots,\comp{u}_s}^s,p-2]_p.\]
Since $t \notin \{1,p-1\}$, all elements of the above tuple are in the range $\{0, 1, \ldots, p - 1\}$. Hence,
\[e \star d = (\overbrace{p-1,\dots,p-1}^r,t+1,\overbrace{u_1,\dots,u_s}^s,\overbrace{p-1,\dots,p-1}^{r+1},p-t+1,\overbrace{\comp{u}_1,\dots,\comp{u}_s}^s,p-2)_p.\]
For the sum of base $p$ digits, using $2r + s + 1 = m + (r + 1) - 2$, we have
\begin{equation*}
\wt(e \star d) = (2r+s+1)(p-1) + 2p = m(p - 1) + (r + 1)(p - 1) + 2 > m(p - 1) + \wt(e).
\end{equation*}
This establishes the first case.

Let us now consider the case where there exists an index $j$ in $\{1,\dots,s\}$ such that $u_j \notin \{0,1,p-2,p-1\}$. Let $h = p^r - 1 + ((p-1)/2-1)p^r + p^{r+j+1}$. As before, it holds that $\wt(e) = \wt(h+1) + \wt(h)$. Since $\wt(h + 1) = (p - 1) / 2 + 1$ and $\wt(h) = r(p - 1) + (p - 1)/2$, this implies $\wt(e) = (r + 1)(p - 1) + 1$.
Furthermore, $2h + 1 = 2p^r - 1 + (p - 3)p^r + 2 p^{r + j + 1} = (p - 2)p^r + p^r - 1 + 2p^{r + j + 1}$ so
\begin{equation*}
2h + 1 = (\overbrace{p - 1, \ldots, p - 1}^r, p - 2, \overbrace{0, \ldots, 0}^j, 2, \overbrace{0, \ldots, 0}^{s - j})_p.
\end{equation*}
We now substitute $h$ in~\eqref{eq:de} and distinguish between the cases $r>0$ and $r= 0$. By the choice of $j$, both of $u_j+2$ and $p-1-u_j+2$ are in $\{0,\dots,p-1\}$. For $r>0$, we obtain
\begin{align*}
e \star d = (&\overbrace{p-1,\dots,p-1}^{r-1},p-2,\hphantom{p -\;\;} t,u_1,\dots,u_{j-1},u_j+2,u_{j+1},\dots,u_s,p-1,\\
    &\,p-1,\dots,p-1,p-2,p-t,\comp{u}_1,\dots,\comp{u}_{j-1},\comp{u}_{j}+2,
    \comp{u}_{j+1},\dots,\comp{u}_s,p-2)_p.
\end{align*}
For $r = 0$, we obtain that $e \star d$ is equal to
\begin{equation*}
    (t,u_1,\dots,u_{j-1},u_j+2,u_{j+1},\dots,u_s,p-2,
    p-t,\comp{u}_1,\dots,\comp{u}_{j-1},\comp{u}_{j}+2,
    \comp{u}_{j+1},\dots,\comp{u}_s,p-3)_p.
\end{equation*}
In both cases, the sum of the base $p$ digits is equal to $\wt(e \star d) = m(p-1)+ (r+1)(p-1)+2$. Since $\wt(e) = (r+1)(p-1)+1$ as shown above, the result follows.
\end{proof}

We have now established that, for a fixed dimension $n$, the number of cases left to check is independent of $p$. Indeed, $t$ can only take one of $1$, $p-1$, while each $u_j$ for $1 \leq j \leq s$ can be $0$, $1$, $p-2$, or $p-1$. 

\subsection{Proof of Lemma~\ref{lem:power_2} for the Remaining Cases}

The argument for settling the remaining cases is more technical and assumes $p \neq 5$. For our proof, we need the following technical lemma.

\begin{lemma}
\label{lem:technical}
Let $p > 3$. Let $s$ be a positive integer and $u_0,u_1,\dots,u_s \in \{0,1,p-2,p-1\}$. For $v = [2u_1-u_0,\dots,2u_{s-1}-u_{s-2},2u_s -u_{s-1}]_p \neq p^s-1$, there exists $\delta$ in $\{-1, 0, 1\}$ such that
\begin{equation*}
v + \delta\,p^s =  [\gamma_1,\dots,\gamma_s]_p,
\end{equation*}
with $\gamma_1, \ldots, \gamma_s$ in $\{0, \ldots, p - 1\}$ and $0 < u_s + \delta + 1 \le p - 1$.
Furthermore, if $u_s = 1$, $\delta = 1$ and $p > 5$, then there exists an index $j$ such that $\gamma_j \not\in \{0, 1, p - 2, p - 1\}$.
\end{lemma}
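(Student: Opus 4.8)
The plan is to work with a closed form for $v$. Setting $A = \sum_{i=1}^{s} u_i p^{i-1}$ and $B = \sum_{i=0}^{s-1} u_i p^i$, the definition gives $v = 2A - B$, and the telescoping identity $pA - B = u_s p^s - u_0$ yields
\[
v = u_s p^s - u_0 - (p-2)A .
\]
Since $A$ and $B$ both lie in $\{0,\dots,p^s-1\}$, we get $-p^s < v < 2p^s$, so there is a unique $\delta \in \{-1,0,1\}$, namely $\delta = -\lfloor v/p^s\rfloor$, with $v + \delta p^s \in \{0,\dots,p^s-1\}$. The base-$p$ expansion of this number is the required $[\gamma_1,\dots,\gamma_s]_p$ with all $\gamma_j \in \{0,\dots,p-1\}$. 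It then remains to verify the constraint $0 < u_s + \delta + 1 \le p-1$, i.e.\ $u_s + \delta \in \{0,\dots,p-2\}$, and the ``furthermore'' clause.

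First I would prove the constraint by determining $\delta$ from the value of $u_s$. Substituting $A = u_s p^{s-1} + A'$ with $A' = \sum_{i=1}^{s-1} u_i p^{i-1}$ gives $v = 2 u_s p^{s-1} - R$, where $R = u_0 + (p-2) A' \in \{0,\dots,(p-2)p^{s-1}+1\}$. Splitting into the four possible values of $u_s$ and locating $v$ within a band of width $p^s$: (i) if $u_s = 0$ then $v \in \{-p^s+1,\dots,0\}$, so $\delta \in \{0,1\}$; (ii) if $u_s = 1$ then $v \in \{-p^s+1,\dots,p^s-1\}$, so $\delta \in \{0,1\}$ and $u_s + \delta \le 2 \le p-2$; (iii) if $u_s = p-2$ then $v \ge 0$, so $\delta \in \{0,-1\}$; (iv) if $u_s = p-1$ then $v \ge p^s - 1$, and here the hypothesis $v \neq p^s-1$ forces $v \ge p^s$, hence $\delta = -1$ and $u_s + \delta = p-2$. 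In each case $u_s + \delta \in \{0,\dots,p-2\}$, as needed (the estimate $2 \le p-2$ in (ii) uses $p>3$). The exclusion of $v = p^s-1$ is used exactly in case (iv), to rule out $\delta = 0$, which would otherwise violate the bound.

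For the ``furthermore'' clause, assume $u_s = 1$, $\delta = 1$ and $p>5$. Then $v < 0$, i.e.\ $R > 2p^{s-1}$, and I set $w := v + p^s = p^s + 2p^{s-1} - R$. I would first estimate the leading digit $\gamma_s = \lfloor w/p^{s-1}\rfloor$: from $R > 2p^{s-1}$ we get $w < p^s$, so $\gamma_s \le p-1$, and from $R \le (p-2)p^{s-1}+1$ we get $w \ge 4p^{s-1}-1$, so $\gamma_s \ge 3$. Thus $\gamma_s$ escapes $\{0,1,p-2,p-1\}$ unless $\gamma_s \in \{p-2,p-1\}$, which forces $R \le 4p^{s-1}$. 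On the other hand, $w < p^s$ forces $u_{s-1} \in \{p-2,p-1\}$ (otherwise the coefficient of $p^{s-1}$ in $w$ is positive and $w > p^s$), whence $A' \ge (p-2)p^{s-2}$ and $R \ge (p-2)^2 p^{s-2}$. Combining the two bounds gives $(p-2)^2 \le 4p$, i.e.\ $p \le 7$, so for $p \ge 11$ the leading digit $\gamma_s$ already escapes. The remaining prime $p=7$ is the crux: there $R \le 4p^{s-1}$ forces $u_{s-1}=p-2$ and pins the lower digits of $A'$ to be small, and I would descend to the next digit $\gamma_{s-1}$, computing it to lie in $\{0,1,2,3\}$ --- escaping when it equals $2$ or $3$ --- and iterate this digit-by-digit descent, whose self-similar structure terminates at the base case $s=1$, where $w \in \{3,4\}$ escapes precisely because $p>5$. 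I expect this carry-propagation analysis for $p=7$, rather than the clean leading-digit estimate, to be the main obstacle.
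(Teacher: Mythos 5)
Your proof of the main claim (existence of $\delta$ and the bounds $0 < u_s + \delta + 1 \le p-1$) is complete and correct, and it is essentially the paper's own argument in tidier bookkeeping: the closed form $v = 2u_sp^{s-1} - R$ with $R = u_0 + (p-2)A'$ is exactly the identity the paper works with, and your four cases on $u_s$ (with the hypothesis $v \neq p^s-1$ used only to force $v \ge p^s$ when $u_s = p-1$) mirror its case analysis. Your treatment of the ``furthermore'' clause for $p \ge 11$ is also correct, and there your route is genuinely different from, and cleaner than, the paper's: instead of localizing at a bad index and chasing carries, your two global estimates --- $\gamma_s \in \{p-2,p-1\}$ forces $R \le 4p^{s-1}$, while $v < 0$ forces $u_{s-1} \in \{p-2,p-1\}$ and hence $R \ge (p-2)^2p^{s-2}$ --- collide as soon as $(p-2)^2 > 4p$, which holds for all $p \ge 11$.

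The gap is the case $p = 7$, which you explicitly defer, and which is precisely where the content of the clause lies (the paper, too, remarks that for $p \ge 11$ its argument ``would be finished here''). Moreover, the sketch you give is not right as stated. Writing $v_i = 2u_i - u_{i-1}$: in the bad case one has $u_{s-1} = p-2$, so $v_s = 4-p$ and $\gamma_s = 4 + c_{s-1}$ with $c_{s-1} \in \{-1,0,1\}$ the incoming carry; hence the bad case is exactly $\gamma_s = 5$ with $c_{s-1} = 1$, which forces $u_{s-2} \in \{0,1\}$, and then $\gamma_{s-1} \in \{2,3,4\}$ if $u_{s-2} = 0$ (escape) or $\gamma_{s-1} = 2 + c_{s-2} \in \{1,2,3\}$ if $u_{s-2} = 1$ --- not $\{0,1,2,3\}$ as you predict. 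The only non-escaping configuration is $u_{s-2} = 1$ together with $c_{s-2} = -1$, i.e. $[v_1,\dots,v_{s-2}]_p < 0$, which reproduces the original problem with $s$ replaced by $s-2$. So the descent is not digit-by-digit but by \emph{two} indices at a time, its invariant must include ``top digit equal to $1$ and negative partial value'', and it terminates at the two base cases $s'=1$ (your $w \in \{3,4\}$) and $s'=2$ (where the bottom carry is $0$, so no bad case exists); none of this is established in your proposal, and establishing it is the actual work. For comparison, the paper avoids any unbounded descent: it fixes $j$ minimal with $u_j = 1$ and $v_j \notin \{1,2\}$, and minimality forces the carry entering the single problematic sub-case ($v_{j-1} = 2p-5$ with $j-1 \ge 2$) to vanish, so its analysis closes after inspecting at most two digits.
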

\begin{proof}
For every index $j$, it holds that $1-p \le 2u_j - u_{j-1} \le 2(p - 1)$.
Since $(p - 1)\sum_{i = 0}^{s - 1} p^i = p^s - 1$, it follows that $v \ge 1 - p^s \ge -p^s$ and $v \le 2(p^s - 1) \le 2p^s - 1$.
Hence, there exists a $\delta$ in $\{-1, 0, 1\}$ such that $0 \le v + \delta p^s \le p^s - 1$.

The prove the lower bound $u_s + \delta + 1 > 0$, it is sufficient to analyze the case $u_s = 0$.
If $u_s = 0$, then $2u_s - u_{s-1} \leq 0$, so $v$ is bounded above by $2(p - 1)\sum_{i=0}^{s-2} p^i = 2 p^{s-1} - 2 \leq p^s-1$. Hence, $\delta \neq -1$ and consequently $u_s + \delta + 1 > 0$ for all values of $u_s$.

For the upper bound $u_s + \delta + 1 \le p - 1$, it enough to consider $u_s$ in $\{p - 2, p - 1\}$. If $u_s = p-2$, then $2u_s - u_{s-1} \geq p-3$, so $v \ge (p-3)p^{s-1} + (1 - p)\sum_{i=0}^{s-2} p^i = (p-4)p^{s-1} + 1 \geq 0$. Hence, $\delta \neq 1$ and the bound $u_s + \delta + 1 \le p - 1$ is satisfied. For the case $u_s = p - 1$, we show that $\delta = -1$ when $v \neq p^s - 1$. From $v = \smash{2u_s p^{s - 1} - u_0 - (p - 2) \sum_{i = 0}^{s - 2} u_{i + 1} p^i}$, we see that in this case the value of $v$ is minimized for $u_0 = p - 2$ and $u_1 = u_2 = \ldots = u_s = p - 1$. Hence, $v \ge p + (p - 1)\sum_{i = 1}^{s - 1} p^i = p^s$. This implies that $\delta = -1$, from which the upper bound follows.

To prove the final claim, define $v_j = 2u_j - u_{j-1}$. If $u_s = 1$, then $v_s \in \{2,1,3-p,4-p\}$. More precisely, $v_s = 2 - u_{s - 1}$.
If $\delta = 1$, then it follows that $v_s \in \{3-p,4-p\}$. Indeed this is trivial if $s= 1$ and, for $s>1$, this follows from the fact that $-p^{s-1} \leq [v_1,\dots,v_{s-1}]_p \leq 2p^{s-1}-1$. Let now $j$ be the smallest index in $\{1,\dots,s\}$ with the property that $u_j = 1$ and $v_{j} \notin \{1,2\}$. As discussed before, this index $j$ exists and $v_{j} \in \{4-p,3-p\}$. In what follows, we assume $p\geq7$.

\paragraph{Case $v_{j} = 3 - p$.} If $j = 1$, then we have $\gamma_{j} = \gamma_1 = 3$ (and the $-p$ decreases the value $\gamma_2$ by 1 as a negative carry). If $j \geq 2$, then we have $\gamma_{j} = 3 +c$, where $c$ is the carry coming from $[v_1,\dots, v_{j-1}]_p$. As $-p^{j-1} \leq [v_1,\dots,v_{j-1}]_p \leq 2p^{j-1}-1$, the carry $c$ can only take the values $0,1,$ or $-1$. Hence, $\gamma_{j} \in \{2,3,4\}$. The result follows because $2,3,4 \notin \{p-2,p-1\}$ for $p \ge 7$.

\paragraph{Case $v_{j} = 4-p$.} Similarly as in the case above, if $j = 1$, then we have $\gamma_{j} = \gamma_1 = 4$ and we are done. Let us therefore assume $j \geq 2$. We have $\gamma_{j} = 4 +c$, where $c \in \{-1,0,1\}$ is the carry coming from $[v_1,\dots, v_{j-1}]_p$ (note that for $p \geq 11$, the proof would be finished here). 
As discussed above, if $v_{j} = 4-p$, then we have $u_{j-1} = p-2$, so
\begin{align*}
v_{j-1} &\in \{2(p-2), 2(p-2) - 1, 2(p-2) - (p-2), 2(p-2) - (p-1)\} \\
&= \{p+p-4, p + p-5,p-2, p-3\}.
\end{align*}
If $v_{j-1} \in \{p-2,p-3\}$, then we have $c = 0$, so $\gamma_{j} = 4$ and the result follows. In case $v_{j-1} = p+p-4$, we have $\gamma_{j-1} = p-4+c'$ for a carry $c' \in \{-1,0,1\}$, so $\gamma_{j-1} \in \{p-3,p-4,p-5\}$ and the result follows as well. Finally, suppose that $v_{j-1} = p+p-5$, so that $\gamma_{j-1} = p-5 + c'$ for a carry $c' \in \{-1,0,1\}$. If $j-1 = 1$, then we have $c'= 0$ (there is no carry), so $\gamma_{j-1} = p-5$ and the result follows as $p-5 \notin \{0,1\}$ if $p \geq 7$. In case of $j-1 \geq 2$, we must have $u_{j-2} = 1$, and thus $v_{j-2} \in \{1,2,3-p,4-p\}$. Since $j \ge 3$ was chosen as the least integer in $\{2,\dots,s\}$ with the property that $u_j = 1$ and $v_{j} \notin \{1,2\}$, we must have $v_{j-2} \in \{1,2\}$. In this case we have $c'=0$, and the result follows. 
\end{proof}

Now, we settle the remaining cases, assuming $p> 5$.
\begin{lemma}
\label{lem:remaining_p7}
Let $p>5$, $n = 2m$ with $m \geq 2$, and $s$ a non-negative integer. If we have $d = D(t,u_1,\dots,u_s)$ with $t$ in $\{1,p-1\}$ and $u_1,\ldots,u_s$ in $\{0,1,p-2,p-1\}$, then there exists an $e$ in $\{1,\dots,p^n-1\}$ such that $\wt(e \star d) - \wt(e) > m(p-1)$.
\end{lemma}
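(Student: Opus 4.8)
The plan is to follow the template of \Cref{lem:power_2_finite}: exhibit an explicit multiplier $e$ in $\{1,\dots,p^n-1\}$, determine the base-$p$ digits of $e \star d$, and verify that $\wt(e\star d) - \wt(e) > m(p-1)$, so that \Cref{cor:degreeBoundPlanarMonomial} rules out planarity. The difficulty in the present regime is exactly that the crude choices of $h$ used in \Cref{lem:power_2_finite} no longer suffice: when $t\in\{1,p-1\}$ and every $u_j\in\{0,1,p-2,p-1\}$, the digit $2h+1$, the digit $2h$, and their carries collide with the already extremal $u$-digits, so the resulting digit sum drops below the threshold. I would therefore design a multiplier that actively reshapes the middle block instead of leaving it intact, aiming as before for the precise margin $\wt(e\star d)-\wt(e)=m(p-1)+1$.

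The key observation is that multiplication by $2-p$ acts on a digit block $u_1,\dots,u_s$ by a double-and-shift rule: the coefficient of $p^{k}$ in $(2-p)\sum_{j}u_j p^{j-1}$ is $2u_{k+1}-u_k$. I would accordingly choose $e$ so that, after reduction modulo $p^n-1$ and after the wrap-around carries produced by its high-order part are accounted for, the block of $e\star d$ sitting over the positions of $u_1,\dots,u_s$ becomes $v=[2u_1-u_0,\dots,2u_s-u_{s-1}]_p$, with the boundary value $u_0$ supplied by the neighbouring digit $t$ of $d$. By the symmetry $\comp{u}_j=p-1-u_j$ built into \eqref{eq:general_form}, the same multiplier reshapes the complementary block in the bottom half simultaneously. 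This puts us exactly in the hypotheses of \Cref{lem:technical}.

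With this normal form in hand, \Cref{lem:technical} does the bookkeeping: it produces $\delta\in\{-1,0,1\}$ and valid digits $\gamma_1,\dots,\gamma_s\in\{0,\dots,p-1\}$ with $v+\delta p^s=[\gamma_1,\dots,\gamma_s]_p$, while the constraint $0<u_s+\delta+1\le p-1$ guarantees that the single carry $\delta$ is absorbed by the adjacent digit without underflow or overflow, keeping $e\star d$ a genuine non-reducing base-$p$ string (not congruent to $0$). I would then read off $\wt(e\star d)$ from the $\gamma_j$ together with the fixed outer digits, keep $\wt(e)$ small (of order $(r+1)(p-1)+1$, as in \Cref{lem:power_2_finite}), and confirm $\wt(e\star d)-\wt(e)\ge m(p-1)+1$. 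The one configuration where this margin is in danger is $u_s=1$ together with $\delta=1$; here the final clause of \Cref{lem:technical} guarantees some $\gamma_j\notin\{0,1,p-2,p-1\}$, and that non-extremal digit supplies exactly the extra unit of digit sum needed to cross the threshold. The hypothesis $p>5$ enters precisely at this point, since for $p=5$ the would-be rescuing digit $\gamma_j\in\{2,3,4\}$ need not avoid the extremal set $\{0,1,3,4\}$.

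I expect the main obstacle to be controlling the carries globally. The high-order contribution of $e$ wraps around modulo $p^n-1$ and feeds carries into both halves of $e\star d$, so one must check, case by case on $t\in\{1,p-1\}$, on the value of $u_s$, and on the sign of $\delta$, that no unintended carry propagation either cancels digits or drives the string to all $(p-1)$'s. The shortest blocks, in particular $s=0$ (where \Cref{lem:technical} does not even apply), would have to be settled by a direct choice of $e$ as a separate base case. Once the carry analysis is organized around the output of \Cref{lem:technical}, every remaining sub-case reduces to the same elementary inequality $\wt(e\star d)-\wt(e)\ge m(p-1)+1$, which closes the proof and, via the inductive argument of \Cref{lem:power_2}, yields \Cref{thm:main_power2}.
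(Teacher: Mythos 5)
Your plan is essentially the paper's own proof: the multiplier $e = (p-1+h)p^m + h + 1$ used there realizes exactly the ``double-and-shift'' action you describe, turning the middle block into $[2u_1-u_0,\dots,2u_s-u_{s-1}]_p$ with $u_0$ supplied by $t$, after which \Cref{lem:technical} manages the carries, the dangerous configuration $(u_s,\delta)=(1,1)$ is rescued by its final clause (precisely where $p>5$ is needed so that the $+2$ correction can be absorbed at a non-extremal digit $\gamma_j$), and $s=0$ is settled separately by direct choices of $h$. The proposal is correct and takes the same route; only the explicit values of $h$ and the resulting subcase-by-subcase carry verification remain to be filled in.
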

\begin{proof}
Let $r = m - 2 - s$.
We will multiply $d$ by $e = hp^m + h+1 + (p-1)p^m = (p-1+h)p^m + h+1$ for various choices of $h$ with $1 \leq h < p^{m+1}$.
The case $s = 0$ will be handled separately from the case $s > 0$.

\paragraph{Case $s=0$.} 
We first multiply $d$ by $(p-1)p^m$ and obtain (see \eqref{eq:general_form} for $d$)
\begin{equation*}
d (p-1)p^m \equiv [\overbrace{0,\dots,0}^r,t,p-1-t,\overbrace{0,\dots,0}^r,-t,p-1+t]_p \pmod{p^n-1}.
\end{equation*}
Let $e = hp^m + h+1 + (p-1)p^m$. By adding the term \eqref{eq:de} from the proof of \Cref{lem:power_2_finite}, we obtain 
\begin{equation*}
    ed \equiv [\overbrace{0,\dots,0}^r,2t, 2h-t+p,\overbrace{0,\dots,0}^r,p-2t, p-1+2h+t]_p \pmod{p^n-1}.
\end{equation*}
If $t = p-1$, then $2t = p + p-2$, so we can write $e \star d$ (mod $p^n-1$) as
\begin{equation*}
    [\overbrace{0,\dots,0}^r,p-2, 2h+2,\overbrace{0,\dots,0}^r,2, p-2+2h+p-1]_p.
\end{equation*}
In particular, whenever $t \in \{1,p-1\}$, we have
\begin{equation*}
    e \star d \equiv [\overbrace{0,\dots,0}^r,t', 2h-u_0+p,\overbrace{0,\dots,0}^r,p-t', p-1+2h+u_0]_p \pmod {p^n-1},
\end{equation*}
where $t' \in \{2,p-2\}$, and $u_0 = 1$ if $t' = 2$ and $u_0 = p - 2$ if $t' = p - 2$.

If $t' = 2$, let $h = p^{r + 1} - 1 - (p - 1)/2$. As $0 \leq h+1 <p^m$ and $0 \leq p-1+h <p^m$, we have $\wt(e) = \wt(h+1) + \wt(p-1+h)$. Since $\wt(h + 1) = r(p-1) + (p - 1)/2 + 1$ and $\wt(p - 1+ h) =  (p - 1)/2$, it follows that $\wt(e) = (r + 1)(p - 1) + 1$.
From $2h = p^{r + 1} - 1 + \sum_{i = 1}^r (p - 1)p^i$, we obtain
\begin{align*}
e \star d = [&\overbrace{p-1,\dots,p-1}^r,t'+1, p-1-u_0,\overbrace{p-1,\dots,p-1}^r,p-t'+1, p-2+u_0]_p \\
= (&\overbrace{p-1,\dots,p-1}^r,3, p-2,\overbrace{p-1,\dots,p-1}^r,p-1, p-1)_p.
\end{align*}
Hence, $\wt(e \star d) = 2(r+1)(p-1) + p+1 = m(p-1) + (r+1)(p-1) + 2 > m(p - 1) + \wt(e)$.

If $t' = p-2$, let $h = p^{r + 1} - (p - 1)$. We then have $\wt(e) = \wt(h+1) + \wt(p-1+h) = 2 + r(p-1) + 1 = r(p-1) + 3$ and $2h = 2 - p + p^{r+1} + \sum_{i=1}^r(p-1)p^i$. If $r>0$, we obtain
\begin{align*}
    e \star d \equiv [&p-2,\overbrace{p-1,\dots,p-1}^{r-1},t'+1, 2-u_0+p,\\
    &p-2,\overbrace{p-1,\dots,p-1}^{r-1},p-t'+1, p+1+u_0]_p \\
    \equiv [&p-2,\overbrace{p-1,\dots,p-1}^{r-1},p-1, 4,p-2,\overbrace{p-1,\dots,p-1}^{r-1},3, p+p-1]_p \\
    \equiv (&p-1,\overbrace{p-1,\dots,p-1}^{r-1},p-1, 4,p-2,\overbrace{p-1,\dots,p-1}^{r-1},3,p-1)_p \pmod {p^n-1},
\end{align*}
with $\wt(e \star d) = 2(r+1)(p-1)+6 = (r+2)(p-1) + r(p-1)+6 > m(p - 1) + \wt(e)$. If $r = 0$, then we obtain (using $t' = p - 2$ and $u_0 = p - 2$) that $e \star d$ is congruent to
\begin{align*}
    [t', 2-u_0+p,p-t', p+1+u_0]_p &\equiv [p-2, 4,2, p+p-1]_p \\
    &\equiv (p-1, 4,2, p-1)_p \pmod {p^n-1}.
\end{align*}
Hence, $\wt(e \star d) = (r+2)(p-1) + r(p-1) + 6 > m(p - 1) + \wt(e)$.

\paragraph{Case $s>0$.} 
Multiplying $d$ by $(p-1)p^m$ modulo $p^n - 1$ yields
\begin{align*}
[&\overbrace{0,\dots,0}^r,t,u_1-t,\overbrace{u_2-u_1,u_3-u_2,\dots,u_s-u_{s-1}}^{s-1},p-1-u_s,\\
&\overbrace{0,\dots,0}^r,-t,-(u_1-t),\overbrace{-(u_2-u_1),-(u_3-u_2),\dots,-(u_s-u_{s-1})}^{s-1},p-1+u_s]_p.    
\end{align*}
Let $e = hp^m + h+1 + (p-1)p^m$. By addition with term \eqref{eq:de} from the proof of \Cref{lem:power_2_finite}, we obtain that $e \star d$ is congruent to
\begin{align*}
    [&\overbrace{0,\dots,0}^r,\phantom{p-}\;\,2t,2u_1-t,\overbrace{2u_2-u_1,\ldots,2u_s-u_{s-1}}^{s-1}, 2h-u_s+p,\\ &\,0,\dots,0,p-2t, \overline{2u_1-t},\overline{2u_2-u_1},\dots,\overline{2u_s-u_{s-1}},p-1+2h+u_s]_p.
\end{align*}
If $t = p-1$, then we have $2t = p + p-2$, so we can write $e \star d$ (mod $p^n-1$) as
\begin{align*}
    [&\overbrace{0,\dots,0}^r,\hphantom{p-()}p-2,2u_1-(p-2),\overbrace{2u_2-u_1,\dots,2u_s-u_{s-1}}^{s-1}, 2h-u_s+p,\\ &\,0,\dots,0,\,p-(p-2), \overline{2u_1-(p-2)},\overline{2u_2-u_1},\dots,\overline{2u_s-u_{s-1}},p-1+2h+u_s]_p.
\end{align*}
Hence, in any case of $t \in \{1,p-1\}$, we have
\begin{align*}
e \star d \equiv [&\overbrace{0,\dots,0}^r,\hphantom{p-}\;\,t',\overbrace{2u_1-u_0,\ldots,2u_s-u_{s-1}}^{s}, 2h-u_s+p,\\ &0,\dots,0,p-t', \overline{2u_1-u_0},\ldots,\overline{2u_s-u_{s-1}},p-1+2h+u_s]_p \pmod {p^n-1},
\end{align*}
where $t' \in \{2,p-2\}$ and $u_0 = 1$ if $t' = 2$ and $u_0 = p - 2$ if $t' = p - 2$. Define $v = [2u_1-u_0,\dots,2u_s - u_{s-1}]_p$. Note that $v \neq p^s-1$, as $u_0 \neq p-1$.
By \Cref{lem:technical}, there exists a $\delta$ in $\{-1, 0, 1\}$ such that $v = (\gamma_1, \ldots, \gamma_s)_p - \delta p^s$ with $\gamma_1, \ldots, \gamma_s$ in $\{0,\dots,p-1\}$. Hence, $e \star d$ is congruent to
\begin{equation*}
[\overbrace{0,\dots,0}^r,t',\overbrace{\gamma_1,\dots,\gamma_s}^s, 2h-u_s+p-\delta, \overbrace{0,\dots,0}^r,p-t', \overbrace{\comp{\gamma}_1,\dots,\comp{\gamma}_s}^s,p-1+2h+u_s+\delta]_p.
\end{equation*}
We finish the proof using a case distinction between three subcases corresponding to $u_s \notin \{0, 1\}$ and $r = 0$, $u_s \notin \{0, 1\}$ and $r > 0$, and $u_s \in \{0, 1\}$.

\paragraph{Subcase $u_s \notin \{0,1\}$ and $r=0$.} We choose $h = 1$ so that $\wt(e) = 3$. We have
\begin{equation}
\label{eq:fail_p5}
e \star d = [t'+1,\overbrace{\gamma_1,\dots,\gamma_s}^s, p-u_s+2-\delta, p-t', \overbrace{\comp{\gamma}_1,\dots,\comp{\gamma}_s}^s,1+u_s+\delta]_p.
\end{equation}
By assumption, $u_s \in \{p-2,p-1\}$ and $p>5$, so $0 \leq p-u_s+2 - \delta \leq p-1$.  Moreover, by \Cref{lem:technical}, $0 \leq 1+u_s + \delta \leq p-1$. Hence, $\wt(e \star d) = (s+2)(p-1)+6 = m(p-1)+6 > m(p - 1) + \wt(e)$.

\paragraph{Subcase $u_s \notin \{0,1\}$ and $r>0$.}
Let $h = ((p - 1)/2)\,p^r - (p - 1)$. 
Since $\wt(h + 1) = r(p - 1) - (p - 1) / 2 + 1$ and $\wt(p - 1 + h) = (p - 1)/2$, we have $\wt(e) = \wt(h+1) + \wt(p-1+h) = r(p-1) + 1$. Furthermore, $2h = (p - 1) p^r - 2(p - 1) = (p-2)p^r + 2 - p + \sum_{i=1}^{r-1}(p-1)p^i$. Hence, $e \star d$ is
\begin{align*}
[&\overbrace{p-1,\dots,p-1}^{r-1},p-2,\hphantom{p-}\;\,t',\overbrace{\gamma_1,\dots,\gamma_s}^s,2-u_s-\delta, \\ 
&\overbrace{p-1,\dots,p-1}^{r-1},p-2,p-t', \overbrace{\comp{\gamma}_1,\dots,\comp{\gamma}_s}^s,1+u_s+\delta]_p.
\end{align*}
To determine the sum of base-$p$ digits of $e\star d$, we use the same argument as for the case $r=0$ above. If $r-1 > 0$, then $e \star d$ is equal to
\begin{align*}
&(\overbrace{p-1,\dots,p-1}^{r-1},\;\;\;\;\;\;\;\;\,p-2,t',\overbrace{\gamma_1,\dots,\gamma_s}^s, p-u_s+2-\delta,\\ 
&p-2,\overbrace{p-1,\dots,p-1}^{r-2},p-2,p-t', \overbrace{\comp{\gamma}_1,\dots,\comp{\gamma}_s}^s,1+u_s+\delta)_p.
\end{align*}
If $r-1 = 0$, then $e \star d$ equals
\begin{equation*}
(p-2,t',\overbrace{\gamma_1,\dots,\gamma_s}^s,p-u_s+2-\delta, p-3,p-t', \overbrace{\comp{\gamma}_1,\dots,\comp{\gamma}_s}^s,1+u_s+\delta)_p.
\end{equation*}
In both cases, we obtain $\wt(e\star d) = m(p-1) + r(p-1)+2 > m(p - 1) + \wt(e)$. 

\paragraph{Subcase $u_s \in \{0, 1\}$.} Let $z \in \{1,\dots,s+1\}$ and $h = (p^{r + 1 + z} - 1) + (p^{r + 1} - 1) - (p - 1)/2$. Equivalently, $h = p^{r + 1 + z} - 1 + (p - 1)/2 + \sum_{i = 1}^r (p - 1)p^i$.
Let us first analyze the sum of base-$p$ digits of $e$. If $z \neq s+1$, then we have $0 \leq p-1+h <p^m$ and $0 \leq h+1 < p^m$ with $\wt(h+1) = (p-1)/2 + r(p-1) + 1$ and $\wt(p-1+h) = (p - 1)/2$, yielding $\wt(e) = (r+1)(p-1) +1$. If $z = s+1$, then $r+1+z = m$, so $(p-1+h)p^m + h+1$ is congruent to
\begin{align*}
&\frac{p-1}{2} + \sum_{i=1}^r(p-1)p^i + p^m + \left(\frac{p-1}{2}-2\right)p^m + p^{r+1+m} + 1 \\
\equiv &\left(\frac{p-1}{2}+1\right) + \sum_{i=1}^r(p-1)p^i + \left(\frac{p-1}{2}-1\right)p^m + p^{r+1+m} \pmod{p^n - 1}.
\end{align*}
We take $e$ with $1 \leq e \leq p^n-1$ congruent to $(p-1+h)p^m + h+1$, so that
$\wt(e) = (r+1)(p-1) + 1$ as well. 
Multiplying $h$ by $2$, we obtain $2h = p^{r+1} + 2p^{r+1+z} -3 + \sum_{i=1}^r(p-1)p^i$.
We now consider the cases $(u_s,\delta) \neq (1,1)$  and $(u_s,\delta) = (1,1)$ separately.

In case $(u_s,\delta) \neq (1,1)$, choosing $z = s+1$ yields
\begin{align*}
e \star d \equiv [&\overbrace{p-1,\dots,p-1}^r,\hphantom{p-}\;\,t'+1,\overbrace{\gamma_1,\dots,\gamma_s}^s, p-u_s-1-\delta,\\ &p-1,\dots,p-1,p-t'+1, \comp{\gamma}_1,\dots,\comp{\gamma}_s,p-2+u_s+\delta]_p \pmod {p^n-1}. 
\end{align*}
From \Cref{lem:technical}, we have $0 \le u_s + \delta < p - 1$ (note that $v \neq p^s - 1$ as $u_0 \neq p - 1$). Hence, $0 < p - 1 - u_s - \delta \le p - 1$.
Furthermore, by assumption $u_s \in \{0, 1\}$ and $(u_s, \delta) \neq (1, 1)$, so $0 \le p - 2 + u_s + \delta \le p - 1$. Hence,
\begin{align*}
e \star d = (&\overbrace{p-1,\dots,p-1}^r,\phantom{p-}\;\, t'+1,\overbrace{\gamma_1,\dots,\gamma_s}^s, p-u_s-1-\delta,\\ &\,p-1,\dots,p-1,p-t'+1, \comp{\gamma}_1,\dots,\comp{\gamma}_s,p-2+u_s+\delta)_p,
\end{align*}
with $\wt(e \star d) = (r+s+2)(p-1)+ (r+1)(p-1) + 2 > m(p - 1)+ \wt(e)$.

Finally, let us consider the case $(u_s,\delta) = (1,1)$. By \Cref{lem:technical}, there exists an index $j$ in $\{1,\dots,s\}$ such that $\gamma_j \notin\{0,1,p-2,p-1\}$. By choosing $z = j$, we obtain
\begin{align*}
e \star d = [&\overbrace{p-1,\dots,p-1}^r,\hphantom{p - }\,\;t'+1,\gamma_1,\dots,\gamma_{z-1},\gamma_z + 2, \gamma_{z+1},\dots,\gamma_s, p-u_s-3-\delta,\\ &\,p-1,\dots,p-1,p-t'+1, \comp{\gamma}_1,\dots,\comp{\gamma}_{z-1},\comp{\gamma}_{z}+2,\comp{\gamma}_{z+1},\dots,\comp{\gamma}_s,p-4+u_s+\delta]_p \\
= (&p-1,\dots,p-1,\phantom{p - }\,\;t'+1,\gamma_1,\dots,\gamma_{z-1},\gamma_z + 2, \gamma_{z+1},\dots,\gamma_s, p-5,\\ &p-1,\dots,p-1,p-t'+1, \comp{\gamma}_1,\dots,\comp{\gamma}_{z-1},\comp{\gamma}_{z}+2, \comp{\gamma}_{z+1},\dots,\comp{\gamma}_s,p-2)_p,
\end{align*}
with $\wt(e \star d) = (r+s+2)(p-1)+ (r+1)(p-1) + 2 > m(p - 1) + \wt(e)$.
\end{proof}

For $p=5$, the only situation in which the construction of $e$ in the proof of \Cref{lem:remaining_p7} might fail is when $s>0$ and $u_s \in \{1,p-2\}$. Indeed, if $u_s = p-2$, then $p-u_s+2 - \delta$ in expression~\eqref{eq:fail_p5} can be equal to $p$ when $\delta = -1$ (then $u_{s-1} \in\{0,1\}$). If $u_s = 1$ and $\delta = 1$ (i.e., $u_s = 1, u_{s-1} \in \{p-2,p-1\}$), then \Cref{lem:technical} does not guarantee existence of an index $j$ such that $\gamma_j \notin \{0,1,p-2,p-1\}$ if $p=5$. 

\subsection{Proof of the Main Result}
As already explained by Coulter and Lazebnik in~\cite{DBLP:journals/ffa/CoulterL12}, an inductive argument now yields the classification of planar monomials over $\smash{\F_{p^{2^k}}}$ for $p>5$.
\powerTwo*
\begin{proof}
Since every monomial $X^{2p^i}$ is planar over $\smash{\F_{p^{2^k}}}$, we only need to show that any planar monomial $X^d$ over $\smash{\F_{p^{2^k}}}$ is of the form $d \equiv 2p^i \pmod {p^{2^k}-1}$.
The proof is by induction on $k$. The statement holds for $k \in \{0,1\}$, see~\cite{johnson1987projective,coulter2006classification} and our new proofs in \Cref{sec:classification_prime}. Suppose that the statement holds for $k-1$ and define $m = 2^{k-1}$ and $n = 2m = 2^k$. If $X^d$ is planar over $\F_{p^n}$, then it must be planar over the subfield $\F_{p^m}$. The induction hypothesis yields $d \equiv 2p^j \pmod{p^m-1}$ for some non-negative integer $j$. Hence, a cyclic shift of $(d_0+d_m,d_1+d_{m+1},\dots,d_{m-1}+d_{n-1})$ must be in one of the three cases listed in \Cref{lem:coulter_lazebnik}, where $d = (d_0,d_1,\dots,d_{n-1})_p$. The only planar monomials in case 1 have exponent $d = 2p^i$ for some non-negative integer $i$. As discussed earlier, case 2 does not contain planar monomials. Finally, \Cref{lem:power_2} implies that case 3 does not contain planar monomials either.
\end{proof}

\section{A Conjecture on the Base-\texorpdfstring{$p$}{p} Digit Sum of Integers}
Based on computations, we raise the following conjecture.

\begin{conjecture}
\label{conj:extended}
For all positive integers $d \le q - 1$ with $d \not\equiv 1 \pmod{p-1}$ and $d \not\equiv\lfloor (p + 1)/2 \rfloor \pmod{p-1}$, there exists a positive integer $e \le q - 1$ such that \begin{align}\label{eq:weight}\wt(e \star d) - \wt(e) > \frac{n(p-1)}{2},\end{align}
    unless
    \begin{enumerate}[(i)]
        \item $\wt(d) = 2$, or
        \item $p=5$, $n$ odd, and $d=5^j (5^i+1)/3$ for some non-negative integers $i,j$.
    \end{enumerate}
\end{conjecture}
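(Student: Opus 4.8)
The plan is to prove \Cref{conj:extended} constructively, exhibiting for each admissible $d$ an explicit multiplier $e$ that forces $\wt(e \star d)$ to be large relative to $\wt(e)$, exactly as in the proofs of \Cref{lem:power_2_finite} and \Cref{lem:remaining_p7}. Since the statement is purely about base-$p$ digit sums, two normalizations come first. Both $\wt(d)$ and the quantity $\wt(e \star d) - \wt(e)$ are invariant under cyclically shifting the base-$p$ digits of $d$ (replacing $d$ by $pd \bmod (q-1)$, because $e \star (pd)$ is a cyclic shift of $e \star d$), so one fixes a convenient cyclic representative of $d$; and the complement identity $\wt(q - 1 - x) = n(p-1) - \wt(x)$ will govern the target values. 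The two excluded residue classes $d \equiv 1$ and $d \equiv \lfloor (p+1)/2 \rfloor \pmod{p-1}$ are precisely those where the digit-sum method is not the appropriate tool even over the prime field --- they are the exponents that the classification in \Cref{cor:planarPrimeField} disposes of through Hermite's criterion --- so they are deliberately left out.

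I would first dispose of exception (i). If $\wt(d) = 2$, then the composition bound $\wt(e \star d) \le \wt(e)\,\wt(d) = 2\,\wt(e)$ together with the trivial bound $\wt(e \star d) \le n(p-1)$ gives
\[
\wt(e \star d) - \wt(e) \le \min\{\wt(e),\ n(p-1) - \wt(e)\} \le \frac{n(p-1)}{2},
\]
so no violating $e$ exists and these Dembowski--Ostrom exponents must be excluded. (This is the same observation that makes \Cref{thm:degreeBoundPlanar} vacuous for $\adeg{F} \le 2$.) The next step is a genericity reduction generalizing \Cref{lem:power_2_finite}: for $p > 5$ and $\wt(d) > 2$, I would show that if some base-$p$ digit of $d$ lies outside the extreme set $\{0, 1, p-2, p-1\}$, then a multiplier from the family $e = h p^{m} + h + 1$ and its complemented variant $(p - 1 + h)p^m + h + 1$ produces enough carries to violate the bound. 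This collapses the problem to the \emph{extreme-digit} exponents, whose digits all lie in $\{0, 1, p-2, p-1\}$; for fixed $n$ there are at most $4^n$ such digit-patterns, independently of $p$, and using cyclic shifts and the complement symmetry they can be further normalized to a short list of canonical shapes.

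The core of the argument is to produce a violating $e$ for every extreme-digit shape except those in exception (ii), and here I would lean on the templates of \Cref{lem:remaining_p7}: multipliers assembled from the blocks $h p^m + h + 1$ and $(p - 1 + h)p^m + h + 1$, with a free index (the role of $z$ in \Cref{lem:remaining_p7}) chosen to place a $2$-carry where it is most effective, and a carry-analysis lemma generalizing \Cref{lem:technical} to control the digits of the products $2u_j - u_{j-1}$ and the attendant borrows. The delicate verification is that this construction fails only on the family $d = 5^j(5^i + 1)/3$ when $p = 5$: in base $5$ these exponents have the alternating pattern $1, 3, \dots, 1, 3, 2$, whose digit $2 = (p-1)/2$ would ordinarily be exploited, but for $p = 5$ the carry bounds in \Cref{lem:technical} are too weak to guarantee a middle digit in the product --- precisely the failure already flagged after \Cref{lem:remaining_p7}.

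The principal obstacle, and the reason the statement remains a conjecture, is that it is unconditional on planarity and so cannot invoke the subfield reduction (\Cref{lem:coulter_lazebnik}) that, in the $n = 2^k$ proof, cut the extreme-digit exponents down to just three shapes. For general $n$ --- in particular for odd $n$, where the even-dimensional folding machinery does not apply at all --- one must dispose of all $O(4^n)$ extreme-digit shapes by a single family of multipliers that is uniform in both $n$ and $p > 5$, and prove that this family isolates exactly the two exception classes. I would attempt this by induction on the number of complementary digit-blocks, peeling off a pair $(\gamma, \comp{\gamma})$ at a time and reducing to a lower-dimensional extreme-digit exponent while tracking the carries that cross the step. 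Controlling these carries uniformly, and ruling out spurious small-$p$ exceptions beyond the $(5^i + 1)/3$ family, is exactly where I expect the argument to require genuinely new input.
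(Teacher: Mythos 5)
You have not proved the statement, and neither does the paper: \Cref{conj:extended} is precisely the open conjecture of the paper, supported only by computational verification (for $n=2$ up to $p \le 800$, $n = 3$ up to $p \le 400$, and $p^n \le 10^9$ for $n \ge 4$) and by two proved special cases --- the trivial case $\wt(d) > n(p-1)/2$ (take $e=1$) and the lemma covering $d \equiv r \pmod{p-1}$ with every base-$p$ digit at least $r$. So there is no proof in the paper to match your proposal against; the only honest comparison is whether your text constitutes a proof on its own, and it does not. Everything after your treatment of exception (i) is a program rather than an argument: the reduction to extreme-digit exponents is asserted by analogy with \Cref{lem:power_2_finite}, the core case analysis is deferred to unspecified generalizations of \Cref{lem:remaining_p7} and \Cref{lem:technical}, and you yourself concede that handling all $O(4^n)$ extreme-digit patterns uniformly in $n$ and $p$, and showing the construction fails \emph{only} on the $p=5$ family $d = 5^j(5^i+1)/3$, ``requires genuinely new input.'' That deferred step is not a technicality; it is the entire content of the conjecture. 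The paper's Lemmas for the $n = 2^k$ case succeed only because the subfield reduction via \Cref{lem:coulter_lazebnik} forces $d$ into the very rigid shape $D(t,u_1,\dots,u_s)$ of~\eqref{eq:general_form} (a single block and its digit-wise complement); without planarity there is no such reduction, which you correctly identify as the obstacle but do not overcome.

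Two smaller points. First, your argument that exception (i) must be excluded is correct and worth keeping: from $\wt(e \star d) \le \wt(e)\,\wt(d) = 2\,\wt(e)$ and $\wt(e \star d) \le n(p-1)$ one gets $\wt(e \star d) - \wt(e) \le \min\{\wt(e),\, n(p-1)-\wt(e)\} \le n(p-1)/2$, so no violating $e$ exists when $\wt(d)=2$; the paper uses this fact only implicitly (in the remark that \Cref{thm:degreeBoundPlanar} is vacuous for $\adeg{F} \le 2$). Second, your gloss on the excluded residue classes is slightly misleading: $d \equiv 1 \pmod{p-1}$ and $d \equiv \lfloor (p+1)/2 \rfloor \pmod{p-1}$ are not omitted because ``Hermite's criterion is the right tool there,'' but because, as the paper states, those classes contain many $d$ (beyond the listed exceptions) for which no violating $e$ exists at all --- inequality~\eqref{eq:weight} is simply false there, so the conjecture would be wrong without the exclusion.
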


Note that if either $d \equiv 1 \pmod{p-1}$ or $d \equiv \lfloor (p + 1)/2 \rfloor \pmod{p-1}$, then there are many $d$ for which no positive $e \leq q-1$ exists such that inequality~\eqref{eq:weight} holds. The exceptional case $d = 5^j (5^i + 1) / 3$ for $p = 5$ resembles the counterexample $d = 3^j (3^i + 1)/2$ for $p = 3$ to the original Dembowski-Ostrom conjecture that was found by Coulter and Matthews~\cite{DBLP:journals/dcc/CoulterM97}.

\Cref{conj:extended} has been proven for $n=1$; see the proof of \Cref{cor:planarPrimeField}. Moreover, we computationally verified it for all $p \leq 800$ if $n = 2$, for all $p \leq 400$ if $n = 3$, and, if $n \geq 4$, for all $(p,n)$ with $p^n \leq 10^9$. 
\begin{remark}
In the computational verification, we included non-prime $p$ as well. The only counterexamples we found were for the case $p=9$ with $d$ of the form $d = 3p^i$.
\end{remark}

\paragraph{Some special cases of the conjecture.}
A proof of the implication in \Cref{conj:extended} for any special case of $(q, d)$ shows that the monomial $X^d$ is not planar over $\F_{q}$. In particular, \Cref{conj:extended} for the special case $d \equiv 2 \pmod {p-1}$ implies the classification of planar monomials over finite fields of characteristic $p>5$.

If $s_p(d) > n(p - 1)/2$, then \Cref{conj:extended} holds with $e = 1$.
We are also able to prove \Cref{conj:extended} when $d \equiv r \pmod{p - 1}$ and all base-$p$ digits of $d$ are at least $r$.

\begin{lemma}
Let $n \geq 2$ and $d = (d_0,d_1,\dots,d_{n-1})_p$. If $d \equiv r \pmod{p - 1}$ with $2 \leq r \leq p - 1$ and $d_i \geq r$ for all $i$ in $\{0, 1, \ldots, n-1\}$, then there exists a positive integer $e \leq q - 1$ such that \[\wt(e \star d) - \wt(e) > \frac{n(p-1)}{2}.\]
\end{lemma}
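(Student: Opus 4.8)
The plan is to choose $e$ so that multiplication by $d$ produces many digit carries (or rather, keeps the product's digits large) while $e$ itself has small digit sum. The hypothesis that $d \equiv r \pmod{p-1}$ with $2 \le r \le p-1$ and every base-$p$ digit $d_i \ge r$ is what makes a clean construction possible: since $\sum_i d_i = \wt(d) \equiv r \pmod{p-1}$ and each digit is at least $r \ge 2$, the digits of $d$ are ``large enough'' that subtracting a controlled amount from each still leaves a valid digit, which is exactly what allows an explicit $e$ to force $\wt(e \star d)$ up near $n(p-1)$.

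\medskip
\noindent\textbf{Construction of $e$.} First I would try the simplest candidate motivated by the earlier proofs, namely a sparse $e$ of the form $e = \sum_{i \in I} c_i p^i$ with $\wt(e)$ small. The key computation is to understand $e \star d = ed \bmod (p^n-1)$ digit-by-digit. Because we work modulo $p^n-1$, multiplication by a single power $p^i$ is a cyclic shift of the base-$p$ digit string of $d$, and multiplication by a digit $c$ scales each $d_j$ by $c$ (with carries). The plan is to pick $e$ so that the resulting digit vector of $e \star d$ has each coordinate close to $p-1$. A natural first attempt is $e = (p-1)/\gcd(\dots)$-type scalars; more concretely, since $d \equiv r \pmod{p-1}$, I would aim for an $e$ making $e \star d \equiv -1 \equiv q-1 \pmod{p-1}$-flavoured, i.e.\ pushing the weight of the product to roughly $n(p-1)$ while controlling $\wt(e)$.

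\medskip
\noindent\textbf{Carrying out the weight bound.} Once $e$ is fixed, the core of the argument is the inequality
\begin{equation*}
\wt(e \star d) - \wt(e) > \frac{n(p-1)}{2}.
\end{equation*}
I would establish this by computing $\wt(e \star d)$ exactly from the digit description above, using the constraint $d_i \ge r$ to guarantee that the digits I subtract or rescale stay in $\{0,1,\dots,p-1\}$ without producing unexpected carries (the same ``no overflow'' bookkeeping that drives Lemmas~\ref{lem:power_2_finite} and~\ref{lem:remaining_p7}). The hypothesis $d_i \ge r \ge 2$ for \emph{all} $i$ is essential here because it makes the construction uniform across all $n$ digits, so the total weight $\wt(e \star d)$ accumulates a contribution of order $(p-1)$ from each of the $n$ positions, overwhelming the small $\wt(e)$ and the threshold $n(p-1)/2$. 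I would also separately treat the constraint $d \not\equiv 1, \lfloor(p+1)/2\rfloor \pmod{p-1}$ implicitly: since $r \ge 2$ here, the degenerate residues are automatically avoided, and the lower bound $r \ge 2$ (rather than $r \ge 1$) is what keeps the exceptional cases of \Cref{conj:extended} out of scope.

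\medskip
\noindent\textbf{Main obstacle.} The hard part will be controlling the carries in the product $ed \bmod (p^n-1)$ and verifying that the chosen $e$ yields a digit vector whose weight is provably at least, say, $n(p-1) - O(p)$ rather than merely conjecturally so. Unlike the power-of-two case, here $n$ and $r$ are general, so I cannot rely on the explicit three-case structure of \Cref{lem:coulter_lazebnik}; instead the bound must follow from the uniform digit condition $d_i \ge r$ alone. I expect the cleanest route is to exhibit a single $e$ (likely depending only on $r$ and $p$, not on the individual digits $d_i$) for which $e \star d$ has a transparent closed-form digit expansion, and then the weight estimate reduces to a short arithmetic inequality in $n$, $p$, and $r$. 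Confirming that this one choice of $e$ works for every admissible $d$ simultaneously — i.e.\ that no digit configuration sabotages the expected weight — is the step most likely to require care.
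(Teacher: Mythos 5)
Your proposal identifies the right template (pick an explicit $e$ of small digit sum, compute the digits of $e \star d$ exactly, and use $d_i \ge r$ to rule out carries), but it stops exactly where the proof has to begin: no $e$ is ever exhibited. Saying one should ``exhibit a single $e$ \dots for which $e \star d$ has a transparent closed-form digit expansion'' restates the lemma rather than proving it, and the one concrete hint you give points the wrong way: the successful choice is not a \emph{sparse} $e$ but the repunit $e = \sum_{i=0}^{n-2} p^i$, whose $n-1$ digits are all equal to $1$, so $\wt(e) = n-1$. Modulo $p^n-1$, multiplication by this $e$ sums $n-1$ cyclic shifts of the digit string of $d$, so the $i$\textsuperscript{th} position of $ed$ holds $\wt(d) - d_{i+1}$ (indices mod $n$). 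Writing $\wt(d) = k(p-1)+r$ and using the fact that adding the same multiple of $p-1$ to every position changes $ed$ by a multiple of $p^n-1$, one gets
\begin{equation*}
ed \equiv [\,p-1+r-d_1,\; p-1+r-d_2,\; \ldots,\; p-1+r-d_0\,]_p \pmod{p^n-1}.
\end{equation*}
The hypothesis $d_i \ge r$ enters at precisely this point: it forces every entry $p-1+r-d_i$ to lie in $\{r, \ldots, p-1\}$, so there are no carries at all and $\wt(e \star d) = n(p-1) + nr - \wt(d)$ exactly. None of this computation, nor the choice of $e$ that makes it possible, appears in your proposal.

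Moreover, your expectation that ``this one choice of $e$ works for every admissible $d$ simultaneously'' is false, so the gap is substantive rather than presentational. With the repunit one finds $\wt(e\star d) - \wt(e) = (n-k)(p-1) + (r-1)(n-1)$, which falls below $n(p-1)/2$ as soon as $k \ge n/2$: for instance, take $p=7$, $n=10$, $r=2$ and $d$ with six digits equal to $6$ and four equal to $5$, so that $\wt(d) = 56 = 9(p-1)+2$; then the repunit yields only $15$, far short of the threshold $30$. The paper therefore needs a case distinction that your sketch omits: if $k \ge n/2$, then $\wt(d) - 1 \ge n(p-1)/2 + 1$ and the trivial choice $e = 1$ already violates the bound, and only in the complementary case $k < n/2$ is the repunit used, where $(n-k)(p-1) > n(p-1)/2$ and $(r-1)(n-1) \ge 0$ close the argument. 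A complete proof along your lines would have to supply both the repunit construction and this dichotomy; as written, the proposal defers the entire mathematical content to future work.
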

\begin{proof}
Since $d \equiv \wt(d) \pmod{p-1}$, it follows that $\wt(d) = k(p-1) + r$ for a non-negative integer $k \leq n$. We can assume $k < n / 2$ as otherwise, $\wt(d) - 1 \geq n(p-1)/2 + 1$ so that $e = 1$ is sufficient. Multiplying $d$ by $e = \sum_{i=0}^{n-2} p^i$ yields
    \[ed \equiv [\wt(d) -d_1, \wt(d) - d_2, \dots, \wt(d) - d_{n-1},\wt(d) - d_{0}]_p \pmod {q-1}.\]
By substituting $\wt(d) = k(p - 1) + r$, we get 
    \begin{align*}
    ed &\equiv [kp+r-k -d_1, kp+r-k - d_2, \dots, kp+r-k - d_{n-1},kp+r-k - d_{0}]_p \\
    &\equiv [r -d_1, r - d_2, \dots, r - d_{n-1},r - d_{0}]_p \\
    &\equiv [p-1+r -d_1, p-1+r - d_2, \dots, p-1+r - d_{n-1},p-1+r - d_{0}]_p \pmod{q-1}.
    \end{align*}
Since $d_i\geq r$ for all $i$, then if $d \neq (r, r, \ldots, r)_p$, we have that $ed$ is congruent to
    \[(p-1+r -d_1, p-1+r - d_2, \dots, p-1+r - d_{n-1},p-1+r - d_{0})_p = e \star d.\]
If $d = (r,r,\dots,r)_p$, then we have $ed = q-1 = e \star d$.
Hence,
\begin{equation*}
\wt(e \star d) = n(p-1) + nr- \wt(d) = (n-k)(p-1) +r(n-1) > \frac{n(p-1)}{2} + r(n-1).
\end{equation*}
Since $\wt(e) = n-1$ and $r\geq 1$, the result follows. 
\end{proof}

This yields the following corollary on the non-planarity of monomials.
\begin{corollary}
Let $n\geq 2$ and $d = (d_0,d_1,\dots,d_{n-1})_p$. If $d_i > 1$ for all $i$ in $\{0, 1, \ldots, n-1\}$, then $X^d$ is not planar over $\F_{q}$.
\end{corollary}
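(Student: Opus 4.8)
The plan is to derive the statement from the degree bound for planar monomials (\Cref{cor:degreeBoundPlanarMonomial}): since all $d_i > 1$ forces $d > 0$, it suffices to produce, for each such $d$, an exponent $e$ in $\{1,\dots,q-1\}$ with $\wt(e \star d) - \wt(e) > n(p-1)/2$, and the preceding lemma is the intended engine. First I would dispose of the large-weight case: if $\wt(d) > n(p-1)/2 + 1$, then $e = 1$ already works, because $1 \star d = d$ and $\wt(1) = 1$ give $\wt(e \star d) - \wt(e) = \wt(d) - 1 > n(p-1)/2$. So I may assume $\wt(d) \le n(p-1)/2 + 1$. Next I would try to invoke the lemma directly. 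Using $d \equiv \wt(d) \pmod{p-1}$, let $r$ be the unique representative of $d$ modulo $p-1$ in $\{2,\dots,p-1\}$, which exists whenever $d \not\equiv 1 \pmod{p-1}$. When in addition $r \le \min_i d_i$, every digit satisfies $d_i \ge r$ and the hypotheses of the lemma hold verbatim, so the lemma supplies the required $e$. This already settles, for instance, every $d$ with $d \equiv 2 \pmod{p-1}$, where $r = 2 \le \min_i d_i$ automatically.

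The main obstacle is the complementary range: either $d \equiv 1 \pmod{p-1}$, so no admissible $r$ exists at all, or $r > \min_i d_i$, so some digit is strictly smaller than the forced residue. This range is genuinely nonempty and not covered by the lemma: the exponent $d = (2,3)_7$ has forced residue $r = 5 > 2 = \min_i d_i$, yet is non-planar only because $e = 2$ (not the lemma's multiplier) gives $e \star d = (4,6)_7$ with gain $8 > 6$. For such $d$ the natural multiplier $e = \sum_{i=0}^{n-2} p^i$ from the lemma's proof no longer produces digits in $\{0,\dots,p-1\}$: each position with $d_i < r$ forces a carry, each carry lowers the digit sum by $p-1$, and so the clean count $\wt(e \star d) = n(p-1) + nr - \wt(d)$ breaks down. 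I expect this carry bookkeeping to be the crux of the argument.

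To overcome it, the plan is to choose $e$ by hand exactly as in the proof of \Cref{lem:remaining_p7}: split into subcases according to the (few) positions whose digits are small relative to $r$, and adjust the multiplier by a correction term — a single added power of $p$, or a factor such as doubling — that absorbs the problematic carries while keeping $\wt(e)$ controlled. The standing assumption $\wt(d) \le n(p-1)/2 + 1$, together with $d_i \ge 2$, bounds both the number of small digits and the number of digits exceeding $(p-1)/2$, which in turn caps the total digit-sum loss from carries; the case $d \equiv 1 \pmod{p-1}$ is folded in here by the same choice of $e$. Verifying in each subcase that the resulting digit sum still exceeds $n(p-1)/2 + \wt(e)$ is the technical heart, but it becomes routine once the case split and the correction terms are fixed.
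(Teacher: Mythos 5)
Your reduction is sound as far as it goes---$e=1$ for the large-weight case, and the preceding lemma whenever the residue $r$ of $d$ modulo $p-1$ satisfies $2 \le r \le \min_i d_i$---but the plan for the complementary range cannot be completed, because the violating exponent $e$ you are trying to construct there often does not exist. Take $n = 2$ and any $d \equiv 1 \pmod{p-1}$ with both digits at least $2$, e.g.\ $d = (3,4)_7 = 31$ over $\F_{49}$. Since $\wt(m) \equiv m \pmod{p-1}$ for $1 \le m \le q-1$, every gain satisfies $\wt(e \star d) - \wt(e) \equiv e(d-1) \equiv 0 \pmod{p-1}$, so violating the bound would require a gain of at least $2(p-1)$; but $\wt(e \star d) \le 2(p-1)$ and $\wt(e) \ge 1$, so no such $e$ exists (for $d = 31$ the maximum gain over all $e$ is exactly $6 = n(p-1)/2$). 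This is precisely the phenomenon the paper points out after \Cref{conj:extended}: for $d \equiv 1 \pmod{p-1}$ and $d \equiv \lfloor (p+1)/2 \rfloor \pmod{p-1}$ there are many $d$ admitting no violating $e$, which is why those residues are excluded from the conjecture. Consequently, no amount of carry bookkeeping or corrected multipliers in the style of \Cref{lem:remaining_p7} can close your remaining cases; the statement you are implicitly trying to prove (``a violating $e$ exists for every $d$ with all digits $>1$'') is false, even though the corollary itself is true.

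The missing idea is that you should not certify non-planarity through the degree bound alone, but first use planarity itself to pin down the residue. If $X^d$ were planar over $\F_q$, then by \Cref{lem:restriction} its restriction to $\F_p$ is planar on $\F_p$, and this restriction agrees with $x \mapsto x^{d'}$ where $d' \equiv d \pmod{p-1}$ and $1 \le d' \le p-1$; the prime-field classification (\Cref{cor:planarPrimeField}, together with the computational check for $p \le 11$, or the cited classical results) forces $d' = 2$, i.e.\ $d \equiv 2 \pmod{p-1}$. Now the lemma preceding the corollary applies verbatim with $r = 2$---this is exactly the case you called ``automatic''---and produces an $e$ contradicting \Cref{cor:degreeBoundPlanarMonomial}. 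So $X^d$ cannot be planar. In other words, the troublesome residues you set out to handle by hand never need to be handled at all, because planarity already excludes them; in your example $d = (3,4)_7 = 31$, non-planarity comes from the restriction to $\F_7$ being the identity map, not from any weight inequality.
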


\section*{Acknowledgements}

We thank the anonymous reviewers for their helpful comments and suggestions.

\end{document}